\renewcommand{\theequation}{\thesection.\arabic{equation}}
\newtheorem{theorem}{Theorem}
\newtheorem{lemma}{Lemma}
\newtheorem{proposition}{Proposition}
\newtheorem{corollary}{Corollary}
\newtheorem{remark}{Remark}
\newcommand{\eqnsection}{
\renewcommand{\theequation}{\thesection.\arabic{equation}}
    \makeatletter
    \csname  @addtoreset\endcsname{equation}{section}
    \makeatother}
\def\e{{\mathbb E}}
\def\demi{{1\over 2}}
\def\Z{{{\Bbb Z}}}
\def\P{{{\Bbb P}}}
\def\N{{{\Bbb N}}}
\def\R{{{\Bbb R}}}
\def\lll{{{\mathcal L}}}
\def\fff{{{\mathcal F}}}
\def\ttt{{\mathcal T}}
\def\ue{{\underline e}}
\def\oe{{\overline e}}
\def\E{{{\Bbb E}}}
\def\hhh{{{\mathcal H}}}
\def\indic{{{\mathbbm 1}}}
\newtheorem{corol}{Corollary}[section]
\newtheorem{rema}{Remark}[section]
\newtheorem{exa}{Example}
\renewcommand{\le}{\leqslant}
\renewcommand{\ge}{\geqslant}
\renewcommand{\subset}{\subseteq}
\newcommand{\bal}{\begin{align*}}
\newcommand{\eal}{\end{align*}}
\newcommand{\beq}{\begin{eqnarray*}}
\newcommand{\eeq}{\end{eqnarray*}}
\newcommand{\bte}{\begin{theorem}}
\newcommand{\ete}{\end{theorem}}
\newcommand{\bl}{\begin{lemma}}
\newcommand{\el}{\end{lemma}}
\newcommand{\bd}{\begin{description}}
\newcommand{\ed}{\end{description}}
\newcommand{\bc}{\begin{cases}}
\newcommand{\ec}{\end{cases}}
\newcommand{\bp}{\begin{proof}}
\newcommand{\ep}{\end{proof}}
\newcommand{\bco}{\begin{corol}}
\newcommand{\eco}{\end{corol}}
\newcommand{\1}{\hbox{1 \hskip -7pt I}}
\newcommand{\iy}{\infty}
\newcommand{\tx}{\text}
\newcommand{\tl}{\tilde{l}}
\newcommand{\te}{\tilde{E}}
\newcommand{\tX}{\tilde{X}}
\newcommand{\F}{\mathcal{F}}
\newcommand{\Es}{\mathbb{E}}
\newcommand{\Pb}{\mathbb{P}}
\newcommand{\T}{\mathcal{T}}
\newcommand{\be}{\beta}
\newcommand{\g}{\gamma}
\newcommand{\G}{\Gamma}
\newcommand{\de}{\delta}
\newcommand{\la}{\lambda}
\newcommand{\La}{\Lambda}
\newcommand{\Om}{\Omega}
\def\bdes{\begin{description}}
\def\edes{\end{description}}
\def\tL{{\tilde L}}
\begin{document}
\author[C. Sabot]{Christophe SABOT}
\address{Universit\'e de Lyon, Universit\'e Lyon 1,
Institut Camille Jordan, CNRS UMR 5208, 43, Boulevard du 11 novembre 1918,
69622 Villeurbanne Cedex, France} \email{sabot@math.univ-lyon1.fr}
\author[P. Tarres]{\vspace{3mm} Pierre Tarres\\
{\it Accepted for publication in the \\Journal of the
European Mathematical Society}}
\address{Mathematical Institute, University of Oxford, Andrew Wiles Building, Radcliffe Obervatory Quarter, Woodstock Road, Oxford OX2 6GG. } \email{tarres@maths.ox.ac.uk}

\title[ ]{Edge-reinforced random walk, Vertex-Reinforced Jump Process and the supersymmetric hyperbolic sigma model}

\keywords{} \subjclass[2000]{primary 60K37, 60K35,
secondary 81T25, 81T60}
\thanks{This work was partly supported by the ANR projects MEMEMO and MEMEMO2, and by a Leverhulme Prize. }

\maketitle
%\footnotetext[1]{Universit\'e de Lyon, Universit\'e Lyon 1,
%Institut Camille Jordan, CNRS UMR 5208, 43, Boulevard du 11 novembre 1918,
%69622 Villeurbanne Cedex, France. E-mail: sabot@math.univ-lyon1.fr}
%\footnotetext[2]{Universit\' e Paul Sabatier, Institut de Math\'ematiques de Toulouse, CNRS UMR 5219, 
%118 route de Narbonne,
%Toulouse cedex 9,
%France. E-mail: pierre.tarres@math.univ-toulouse.fr. On leave from the Mathematical Institute, University of Oxford.}
\begin{abstract}
Edge-reinforced random walk (ERRW), introduced by Coppersmith and Diaconis in 1986 \cite{coppersmith}, is a random process, which takes values in the vertex set of a graph $G$, and is more likely to cross edges it has visited before. We show that it can be represented in terms of a  Vertex-reinforced jump process (VRJP) with independent gamma conductances: the VRJP was conceived by Werner and first studied by Davis and Volkov \cite{dv1,dv2}, and is a continuous-time process favouring sites with more local time. We calculate, for any finite graph $G$, the limiting measure of the centred occupation time measure of VRJP, and interpret it  as a supersymmetric hyperbolic sigma model in quantum field theory \cite{dsz}.  This enables us to deduce that VRJP and ERRW are positive recurrent in any dimension for large reinforcement, and that VRJP is transient in dimension greater than or equal to $3$ for small reinforcement, using results of Disertori and Spencer \cite{ds}, Disertori, Spencer and Zirnbauer \cite{dsz}.
\end{abstract}

\section{Introduction}

Let $(\Om,\F,\Pb)$ be a probability space. Let  $G=(V,E,\sim)$ be a nonoriented connected locally finite graph without loops. Let $(a_{e})_{e\in E}$ be a sequence of positive initial weights associated to each edge $e\in E$.

Let $(X_n)_{n\in\N}$ be a random process that takes values in $V$, and let $\F_n=\sigma(X_0,\ldots,X_n)$ be the filtration of its past. For any $e\in E$, $n\in\N\cup\{\iy\}$, let 
\begin{equation}
Z_n(e)=a_e+ \sum_{k=1}^n\1_{\{\{X_{k-1},X_k\}=e\}}
\end{equation}
be the number of crosses of $e$ up to time $n$ plus the initial weight $a_e$.

Then $(X_n)_{n\in\N}$ is called Edge Reinforced Random Walk (ERRW) with starting point $i_0\in V$ and weights $(a_e)_{e\in E}$,  
if  $X_0=i_0$ and,  for all $n\in\N$, 
\begin{equation}
\label{def-vsirw}
\Pb(X_{n+1}=j~|~\F_n)=\indic_{\{j\sim X_n\}}\frac{Z_n(\{X_n,j\})}
{\sum_{k\sim X_n} Z_n(\{X_n,k\})}.
\end{equation}

The Edge Reinforced Random Walk was introduced in 1986 by Diaconis \cite{coppersmith}; on finite 
graphs it is a mixture of reversible Markov chains, and the mixing measure can be determined explicitly 
(the so-called Coppersmith-Diaconis measure, or "magic formula" \cite{diaconis2}, 
see also \cite{keane-rolles1,rolles1}),  which has applications in Bayesian statistics \cite{diaconis-rolles,bacallado1,bacallado2}.

On infinite graphs, the research has focused so far on recurrence/transience criteria. On acyclic or directed graphs, the walk can be seen as a random walk in an \textit{independent } random environment \cite{pemantle4}, and a recurrence/transience phase transition was first observed by Pemantle on trees \cite{collevecchio1,keane-rolles2,pemantle4}. In the case of infinite graphs with cycles, recurrence criteria and asymptotic estimates were obtained by Merkl and Rolles on graphs of the form $\Z\times G$, $G$ finite graph, and on a certain two-dimensional graph \cite{merkl-rolles1,merkl-rolles4,merkl-rolles2,rolles2}, but recurrence on $\Z^2$ was still unresolved.   

Also, this original ERRW model \cite{coppersmith} has triggered a number of similar models of self-organization and learning behaviour; see for instance Davis \cite{davis}, Limic and Tarr\`es \cite{limic,limic-tarres}, Pemantle \cite{Pem07}, Sabot \cite{sabot1,sabot2}, Tarr\`es \cite{tarres2,tarres3} and T\'oth \cite{toth3}, with different perspectives on the topic.

Our first result relates the ERRW to the Vertex-Reinforced Jump Process (VRJP), conceived by Werner and studied by Davis and Volkov \cite{dv1,dv2}, Collevechio \cite{collevecchio2,collevecchio3} and Basdevant and Singh \cite{bs}. 

We call VRJP with weights $(W_e)_{e\in E}$ a continuous-time process $(Y_t)_{t\ge0}$ on $V$, starting at time $0$ at some vertex $i_0\in V$ and such that, if $Y$ is at a vertex $i\in V$ at time $t$, then, conditionally on $(Y_s, s\le t)$, the process jumps to a neighbour $j$ of $i$ at rate $W_{\{i,j\}}L_j(t)$, where
$$L_j(t):=1+\int_0^t \1_{\{Y_s=j\}}\,ds.$$

The main results of the paper are the following.
In Section 2, Theorem \ref{annealed},  we represent the ERRW in terms of a  VRJP with independent gamma conductances. 
Section 3 is dedicated to showing, in Theorem \ref{meas}, that the VRJP is a mixture of time-changed Markov jump processes, 
with a computation of the mixing law.
In Section 6, we interpret that mixing law with the supersymmetric hyperbolic sigma model introduced by Disertori, Spencer and Zirnbauer in \cite{dsz} and related to the Anderson model. We prove positive recurrence of VRJP and ERRW in any dimension for large reinforcement in Corollaries \ref{rec} and \ref{rec2}, using a  localization result of Disertori and Spencer \cite{ds}, and transience of VRJP in dimension $d\ge3$ at small reinforcement in Corollary \ref{trans} using a delocalization result of Disertori, Spencer, Zirnbauer \cite{dsz}. 

Shortly after this paper appeared electronically, Angel, Crawford and Kozma \cite{ack} proposed another proof of recurrence of ERRW under similar assumptions, without making the link with statistical physics. Equivalent results were also obtained for the VRJP, using the existence of a limiting environment proved in this paper. 
\section{From ERRW to VRJP.}
It is convenient here to consider a time changed version of $(Y_s)_{s\ge0}$: consider the positive continuous additive functional of $(Y_s)_{s\ge0}$
$$
A(s)= \int_0^s {1\over L_{Y_u}(u)} du = \sum_{x\in V} \log(L_x(s)) ,
$$ 
and the time changed process
$$
X_t= Y_{A^{-1}(t)}.
$$
Let $(T_i(t))_{i\in V}$ be the local time of the process $(X_t)_{t\ge0}$
$$
T_x(t)=\int_0^t \indic_{\{X_u=x\}} du.
$$
\begin{lemma}
\label{invtime}
The inverse functional $A^{-1}$ is given by
$$
A^{-1}(t)= \int_0^t e^{T_{X_u}(u)} du=\sum_{i\in V}(e^{T_i(t)}-1).
$$
The law of the process $X_t$ is described by the following: conditioned on the past at time $t$, if the process $X_t$ is at the position $i$, then
it jumps to a neighbor $j$ of $i$ at rate
$$
W_{i,j} e^{T_i(t)+T_j(t)}.
$$
\end{lemma}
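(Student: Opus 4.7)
The plan is to compute the time change $A^{-1}$ directly by pulling back the definition of $L_x$ along it, and then derive the jump rates of $X$ from those of $Y$ by the standard Markov-jump time-change rule.

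First I would observe that $A(s)=\sum_{x\in V}\log L_x(s)$ follows immediately from differentiating: $\frac{d}{ds}\log L_x(s)=L_x(s)^{-1}\mathbf{1}_{\{Y_s=x\}}$, so summing over $x$ gives $1/L_{Y_s}(s)$, matching $A'(s)$. Next, the time change identity $A(A^{-1}(t))=t$ gives $(A^{-1})'(t)=L_{Y_{A^{-1}(t)}}(A^{-1}(t))=L_{X_t}(A^{-1}(t))$. The key step is then to show $L_x(A^{-1}(t))=e^{T_x(t)}$. For this, set $f_x(t):=L_x(A^{-1}(t))$; change of variable $u=A^{-1}(v)$ in $L_x(A^{-1}(t))=1+\int_0^{A^{-1}(t)}\mathbf{1}_{\{Y_u=x\}}\,du$ yields
\[
f_x(t)=1+\int_0^t \mathbf{1}_{\{X_v=x\}}\,f_{X_v}(A^{-1}(v))\,dv=1+\int_0^t\mathbf{1}_{\{X_v=x\}}f_x(v)\,dv,
\]
an ODE $f_x'(t)=\mathbf{1}_{\{X_t=x\}}f_x(t)$ with $f_x(0)=1$, whose unique solution is $f_x(t)=e^{T_x(t)}$. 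Feeding this back into the expression for $(A^{-1})'(t)$ gives $(A^{-1})'(t)=e^{T_{X_t}(t)}$, hence the first form $A^{-1}(t)=\int_0^t e^{T_{X_u}(u)}\,du$; and since $\frac{d}{dt}\sum_i(e^{T_i(t)}-1)=e^{T_{X_t}(t)}$ with both sides vanishing at $t=0$, the second expression $\sum_{i\in V}(e^{T_i(t)}-1)$ follows.

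For the jump rates, I would apply the general principle that if a continuous-time jump process $Y_s$ has jump rate $\lambda(s)$ and we set $X_t=Y_{A^{-1}(t)}$, then $X_t$ has rate $\lambda(A^{-1}(t))\cdot (A^{-1})'(t)$. Here, given $X_t=i$, the original VRJP at (random) time $s=A^{-1}(t)$ jumps to neighbour $j$ at rate $W_{i,j}L_j(s)=W_{i,j}L_j(A^{-1}(t))=W_{i,j}e^{T_j(t)}$. Multiplying by $(A^{-1})'(t)=e^{T_{X_t}(t)}=e^{T_i(t)}$ yields the claimed rate $W_{i,j}e^{T_i(t)+T_j(t)}$.

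The only subtle point is the ODE argument that identifies $f_x(t)$ with $e^{T_x(t)}$; everything else is a bookkeeping exercise with the chain rule. I would verify that $A$ is strictly increasing and diverges to $+\infty$ almost surely (so $A^{-1}$ is well defined on $[0,\infty)$), which follows because $L_x(s)\ge 1$ forces $A'(s)\le 1$ and the VRJP is defined for all $s\ge0$, while $A(s)\to\infty$ because some site accumulates local time of order $s$. No deeper structural input is needed.
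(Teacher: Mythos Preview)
Your proof is correct and follows essentially the same approach as the paper: both establish $L_x(A^{-1}(t))=e^{T_x(t)}$ via the chain rule (the paper differentiates $T_x(A(s))$ in $s$, you differentiate $L_x(A^{-1}(t))$ in $t$) and then read off $(A^{-1})'$ and the jump rates by the standard time-change formula. Note a small typo in your displayed integral equation: the intermediate term should be $f_{X_v}(v)$ rather than $f_{X_v}(A^{-1}(v))$, since $(A^{-1})'(v)=L_{X_v}(A^{-1}(v))=f_{X_v}(v)$; your final expression and conclusion are unaffected.
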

\begin{proof}
First note that
\begin{equation}
\label{tlx}
T_x(A(s))= \log(L_x(s)),
\end{equation}
since
$$
(T_x(A(s)))'= A'(s)\indic_{\{X_{A(s)}=x\}}={1\over L_{Y_s}(s)} \indic_{\{Y_s=x\}}.
$$
Hence,
\begin{eqnarray*}
(A^{-1}(t))'&=&{1\over A'(A^{-1}(t))}
=
L_{X_t} (A^{-1}(t))
=
e^{T_{X_t}(t)},
\end{eqnarray*}
which yields the expression for $A^{-1}$.
It remains to prove the last assertion:
\begin{eqnarray*}
\P(X_{t+dt}=j | \fff_t) &=&
\P(Y_{A^{-1}(t+dt)}=j | \fff_t)
\\
&=&
W_{X_t,j}(A^{-1})'(t) L_{j}(A^{-1}(t)) dt
\\
&=&
W_{i,j} e^{T_{X_t}(t)}e^{T_j(t)} dt
\end{eqnarray*}
\end{proof}

In order to relate ERRW to VRJP, let us first define the following process $(\tX_t)_{t\in\R_+}$, initially introduced by Rubin, Davis and Sellke \cite{davis,sellke2}, which we call here continuous-time ERRW with weights $(a_e)_{e\in E}$ and starting at  $\tX_0:=i_0$ at time $0$.   

\begin{itemize}
\item
Define on each edge $e\in E$ independent point processes (alarm times) as follows. Let $(\tau^e_k)_{e\in E,k\in\Z_+}$ be independent exponential random variables with parameter 1 and define 
$$
V^e _k =\sum_{l=0}^{k-1}{1\over a_e+l} \tau_l^e,\;\;\; \forall k\in\N.$$
\item
Each edge $e\in E$ has its own clock, denoted by $\tilde T_e(t)$, which only runs when the process $(\tX_t)_{t\ge0}$ is adjacent to $e$. This means that if $e=\{i,j\}$, then $\tilde T_{\{i,j\}}(t)= \tilde T_i(t)+\tilde T_j(t)$,
where $\tilde T_i(t)$ is the local time of the process $\tilde X$ at vertex $i$ and time $t$. 
\item
When the clock of an edge $e\in E$ rings, i.e. when $\tilde T_e(t)= V^e_k$ for some $k>0$, then $\tX_t$ crosses it instantaneously (of course, this can happen only when $\tilde X$ is adjacent to $e$).
\end{itemize}

%%%%%%%%%%%%%%%%%%FIGURE 2%%%%%%%%%%%%%%%%%%%%%%%%%

\psset{xunit= 1.05cm,yunit= 1cm}
%\psset{xunit= 0.4725cm,yunit= 0.45cm}
\pspicture(0,-2.5)(15,2)
\psset{linewidth=.5pt}
%draw time lines
\psline{|->}(0,0)(8.5,0)
\psline{|->}(0,1)(8.5,1)
\psline{|->}(0,-1)(8.5,-1)
\psline{|->}(0,-2)(8.5,-2)

\pscircle*(1.1,0) {.05} 
\pscircle*(2.8,0) {.05} 
\pscircle*(4.2,0){.05}
\pscircle*(5,0){.05}
\pscircle*(5.2,0){.05}
\pscircle*(5.31,0){.05}

\pscircle*(0.9,1){.05} 
\pscircle*(2.3,1){.05} 
\pscircle*(3.1,1){.05}
\pscircle*(4,1){.05}
\pscircle*(4.15,1){.05}

\pscircle*(1.3,-1) {.05} 
\pscircle*(2.5,-1) {.05} 
\pscircle*(4.6,-1){.05}
\pscircle*(5.1,-1){.05}
\pscircle*(5.9,-1){.05}
\pscircle*(6.31,-1){.05}

\pscircle*(0.5,-2){.05} 
\pscircle*(1.9,-2){.05} 
\pscircle*(3.4,-2){.05}
\pscircle*(6,-2){.05}
\pscircle*(7.15,-2){.05}

%text
\rput[u](1,0.6){$V_1^{e_0}$}
\rput[u](2.3,0.6){$V_2^{e_0}$}
\rput[u](1.3,-.4){$V_1^{e_1}$}
\rput[u](3,-.4){$V_2^{e_1}$}
\rput[u](1,-1.4){$V_1^{e_2}$}
\rput[u](2.5,-1.4){$V_2^{e_2}$}
\rput[u](0.7,-2.4){$V_1^{e_3}$}
\rput[u](2,-2.4){$V_2^{e_3}$}
%ldots
\rput[b](4.5,1){$\cdots$}
\rput[b](6,0){$\cdots$}
%more text
\rput[u](9.5,1.5){time-line of}
\rput[l](9.35,1){$e_0$}
\rput[l](9.35,0){$e_1$}
\rput[l](9.35,-1){$e_2$}
\rput[l](9.35,-2){$e_3$}
\endpspicture

\medskip

Let $\tau_n$ be the $n$-th jump time of $(\tX_t)_{t\ge0}$, with the convention that $\tau_0:=0$.
\bl
\label{icd} (Davis \cite{davis}, Sellke \cite{sellke2})
Let $(X_n)_{n\in\N}$ (resp. $(\tX_{t})_{t\ge0}$) be an ERRW (resp. continuous-time ERRW) with weights $(a_e)_{e\in E}$, starting at some vertex $i_0\in V$.  
Then  $(\tX_{\tau_n})_{n\ge0}$ and $(X_n)_{n\ge0}$ have the same distribution.
\el
\bp
The argument is based on the memoryless property of exponentials, and on the observation that, if $A$ and $B$ are two independent random variables of parameters $a$ and $b$, then $\Pb[A<B]=a/(a+b)$.
\ep

\begin{theorem}
\label{annealed}
Let $(\tX_{t})_{t\ge0}$ be a continuous-time ERRW with weights $(a_e)_{e\in E}$. 
Then there exists a sequence of independent random variables $W_e\sim\it{Gamma}(a_e,1)$, $e\in E$, such that, conditionally on $(W_e)_{e\in E}$, $(\tilde X_{t})_{t\ge0}$ has the same law as the time modification $(X_t)_{t\ge 0}$ of the VRJP with weights $(W_e)_{e\in E}$. 

In particular, the ERRW $(X_n)_{n\ge 0}$ is equal in  law to the discrete time process associated with a 
VRJP in random independent conductances $W_e\sim \it{Gamma}(a_e,1)$.
\end{theorem}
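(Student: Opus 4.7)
The plan is to construct an explicit coupling on a common probability space. For each edge $e \in E$, independently sample $W_e \sim \mathrm{Gamma}(a_e, 1)$ and, conditionally on $(W_e)_{e\in E}$, let the alarm-time sequence $V_1^e < V_2^e < \cdots$ on edge $e$ be the arrival times of an independent (inhomogeneous) Poisson point process on $\R_+$ with intensity $W_e e^s \, ds$. We then use these $V_k^e$ as the edge alarms in the Rubin--Davis--Sellke construction of $\tX$, and verify two properties: \emph{(i)} the unconditional law of the $V_k^e$'s matches the one used to define the continuous-time ERRW, and \emph{(ii)} conditionally on $(W_e)$, the resulting process is the time-modified VRJP of Lemma \ref{invtime} with weights $(W_e)$.

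For \emph{(i)}, I would compute the joint density directly. Conditionally on $W_e = w$, the joint density of $(V_1^e, \ldots, V_n^e)$ on $\{0 < v_1 < \cdots < v_n\}$ is $w^n \exp\bigl(v_1 + \cdots + v_n - w(e^{v_n} - 1)\bigr)$. Multiplying by the Gamma density of $W_e$ and integrating out $w$ produces $\frac{\Gamma(a_e + n)}{\Gamma(a_e)} \exp\bigl(v_1 + \cdots + v_n - (a_e + n) v_n\bigr)$. After the change of variables $u_l = v_l - v_{l-1}$ (with $v_0 := 0$), the identities $v_1 + \cdots + v_n = \sum_l (n - l + 1) u_l$ and $v_n = \sum_l u_l$ collapse the exponent to $-\sum_l (a_e + l - 1) u_l$, while $\Gamma(a_e+n)/\Gamma(a_e) = \prod_l (a_e + l - 1)$. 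Thus the $(u_l)$ are independent with $u_l \sim \mathrm{Exp}(a_e + l - 1)$, and combined with independence across edges built into the construction, this exactly matches the definition of the $V_k^e$'s in the continuous-time ERRW.

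For \emph{(ii)}, the memoryless property of the Poisson process implies that, given $W_e$ and the current value of the edge clock $\tilde T_e(t)$, the next alarm on $e$ fires at conditional intensity $W_e e^{\tilde T_e(t)}$ with respect to $\tilde T_e$. Since this clock advances at unit rate precisely when $\tX$ sits at an endpoint of $e$, and $\tilde T_e = \tilde T_i + \tilde T_j$ for $e = \{i,j\}$, the real-time rate at which $\tX$, sitting at $i$, jumps across $e$ equals $W_e e^{\tilde T_i(t) + \tilde T_j(t)}$, which is precisely the VRJP (time-modified) rate from Lemma \ref{invtime}. Conditionally on $(W_e)$, the extended state $(\tX, (\tilde T_i)_i)$ is Markovian with these rates, so the two processes have the same conditional law. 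The final assertion on the discrete-time ERRW is then immediate from Lemma \ref{icd}. The main (mild) technical point is the density calculation and change of variables in step \emph{(i)}; step \emph{(ii)} is a direct application of the memoryless property.
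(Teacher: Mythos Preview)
Your proof is correct and follows essentially the same route as the paper: both identify the edge alarm process as a mixture over $W_e\sim\mathrm{Gamma}(a_e,1)$ of inhomogeneous Poisson processes with intensity $W_e e^s\,ds$, and then read off the time-modified VRJP rates from Lemma~\ref{invtime}. The only difference is that the paper invokes Kendall's theorem on the Yule process to obtain this mixture representation, whereas you establish it by a direct density computation and change of variables---a slightly more self-contained but equivalent argument.
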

\bp
For any $e\in E$, define the simple birth process $\{N_t^{e},t\ge0\}$ with initial population size $a_e$, by 
$$N_t^{e}:=a_e+\sup\left\{k\in\N\tx{ s.t. }V_k^{e}\le t\right\}.$$
This process is sometimes called the Yule process: by a result of D. Kendall \cite{kendall}, there exists $W_e:=\lim N_t^{e}e^{-t}$, with distribution $Gamma(a_e,1)$, such that, conditionally on $W_e$, $\{N^{e}_{f_{W_e}(t)}, t\ge0\}$ is a Poisson point process with unit parameter, where $$f_W(t):=\log(1+t/W).$$ 

Let us now condition on $(W_e)_{e\in E}$: $N^{e}$ increases between times $t$ and $t+dt$ with probability 
$W_ee^t\,dt=(f^{-1}_{W_e})'(t)\,dt$. A similar characterization of the timelines is also used in \cite{tarres3}, Lemma 4.7.
If $\tX$ is at vertex $x$ at time $t$, it jumps to a neighbour $y$ of $x$ at rate $W_{x,y}e^{T_x(t)+T_y(t)}$. 
% And $\hX(t)$ is a VRJP with weights $(W_e)_{e\in E}$ since,  for all $e$, $N_{\log(1+t)}^{e}$ is a Poisson process with parameter $W_e$. 
\ep
\section{The mixing measure of VRJP.}

Next we study VRJP. Given fixed weights $(W_e)_{e\in E}$, we denote by $(Y_t)_{t\ge 0}$ the VRJP and  $(X_t)_{t\ge 0}$ its time modification defined in the previous Section, starting at site $X_0:=i_0$ at time $0$ and $(T_i(t))_{i\in V}$ its local time. 
%and such that, if $\tX$ is at vertex $x$ at time $t$, it jumps to a neighbour $y$ of $x$ with probability $W_{x,y}e^{T_x+T_y}\,dt$, where 
%$$
%T_x=T_x(t):= \int_0^{t} \indic_{\{X_u=x\}} du.
%$$
%is the time spent by $(\tX_t)_{t\ge0}$ at site $x$ before time $t$. For notational purposes, we will study $\tX$ here, instead of $\hX$ as is usually the %case. 

It is clear from the definition that the joint process $\Theta_t=(X_t, (T_i(t))_{i\in V})$ is a time continuous Markov process on the state space 
$V\times \R_+^V$  with generator $\tilde L$ defined on $C^\infty$ bounded functions by
$$
\tilde L (f)(i,T)= \left ({\partial \over \partial T_i}f\right)(i,T) + L(T)(f(.,T))(i), \;\;\; \forall (x,T)\in V\times \R_+^V,
$$
where $L(T)$ is the generator of the jump process on $V$ at frozen $T$ defined for $g\in \R^V$:
$$
L(T)(g)(i)=\sum_{j\in V} W_{i,j} e^{T_i+T_j} (g(j)-g(i)), \;\;\; \forall i\in V.
$$
We denote by $\P_{i_0,T}$ the law of the Markov process with generator $\tilde L$ starting from the initial state $(i_0,T)$. 

Note that the law of $(X_t, T(t)-T)$ under $\P_{i_0,T}$ is equal to the law of the process starting from $(i_0, 0)$ with conductances 
$$
W_{i,j}^T=W_{i,j}e^{T_i+T_j}.
$$
For simplicity, we let $\P_i:=\P_{i,0}$.

We show, in Proposition \ref{pconv}, that for finite graphs
the centred occupation times converge a.s., and calculate the limiting measure in Theorem  \ref{meas} i). 
In Theorem \ref{meas} ii) we show that the VRJP $(Y_s)_{s\ge 0}$ (as well as $(X_t)_{t\ge 0}$) is a mixture of time-changed Markov jump processes. 

This limiting measure can be interpreted as a supersymmetric hyperbolic sigma model.
We are grateful to a few specialists of field theory for their advice: Denis Perrot who mentioned that the limit measure of VRJP could be related to the sigma model, and Krzysztof Gawedzki who pointed out reference \cite{dsz}, which actually mentions a possible link of their model with ERRW, suggested by Kozma, Heydenreich and Sznitman, cf \cite{dsz} Section 1.5.  

Note that when $G$ is a tree, if the edges are for instance oriented  towards the root, letting $V_e= e^{U_{\oe}-U_{\ue}}$, the random variables $(V_e)$ are independent and are distributed according to an inverse gaussian law. This was understood in previous works on VRJP \cite{dv1,dv2,collevecchio2,collevecchio3,bs}.

Theorems \ref{annealed} and \ref{meas} enable us to retrieve, in Section  \ref{BCP} the limiting measure of ERRWs, computed by Coppersmith and Diaconis in \cite{coppersmith} (see also \cite{keane-rolles1}), by integration over the random gamma conductances $(W_e)_{e\in E}$. This explains its renormalization constant, which had remained mysterious so far.
\begin{proposition}
\label{pconv}
Suppose that $G$ is finite and set $N=\vert V \vert$.  For all $i\in V$, the following limits exist $\P_{i_0}$ a.s.
$$
U_i =\lim_{t\to \infty} T_i(t) - {t\over N}.
$$
\end{proposition}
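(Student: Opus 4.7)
The plan is to exploit the symmetry of the rates $W_{ij}e^{T_i+T_j}$, which makes the frozen generator $L(T)$ reversible with respect to the uniform measure on $V$ for every $T$. This should force the centered occupation times $\phi_i(t):=T_i(t)-t/N$, which satisfy $\sum_i\phi_i\equiv 0$ and
$$\phi_i(t)=\int_0^t\bigl(\1_{\{X_s=i\}}-1/N\bigr)\,ds,$$
to self-correct toward bounded limits.

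First I would express $\phi_i$ in a Doob--Meyer form tailored to the Markov process $\Theta_t=(X_t,T(t))$ with generator $\tilde L$. For each $i\in V$ I would seek $h_i\colon V\times\R_+^V\to\R$ solving the Poisson equation $\tilde L h_i(x,T)=\1_{\{x=i\}}-1/N$. Since $L(T)$ is reversible for the uniform measure and $\1_{\{\cdot=i\}}-1/N$ is mean zero under it, the natural candidate is the resolvent at $0$,
$$h_i(x,T)=\int_0^\infty\bigl(p^T_s(x,i)-1/N\bigr)\,ds,$$
where $p^T_s$ is the semigroup of $L(T)$ at frozen $T$. With $h_i$ in hand, $M_i(t):=h_i(\Theta_t)-h_i(\Theta_0)-\phi_i(t)$ is a local martingale, and the almost sure convergence of $\phi_i$ reduces to (a) convergence of $h_i(\Theta_t)$ and (b) convergence of $M_i$.

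Second, for (a) I would use the fact that the spectral gap of $L(T)$ rescaled by the total rate $\sim e^{2\max_k T_k}$ stays bounded away from $0$ uniformly in $T$ (depending only on $G$ and $W$), giving a pointwise bound of the form $|h_i(x,T)|\leq C e^{-2\min_k T_k}$. I would then show $\min_k T_k(t)\to\infty$ a.s.\ by a Borel--Cantelli argument: on a finite graph, the positive jump rates guarantee that every vertex is visited infinitely often, which combined with a lower bound on sojourn times forces each $T_k$ to grow. For (b), the carr\'e du champ of $\tilde L$ applied to $h_i$ inherits the same exponential decay, so $\langle M_i\rangle_\infty<\infty$ a.s.\ and $M_i$ converges by the $L^2$ martingale convergence theorem.

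The main obstacle is that the naive resolvent ansatz does not solve the Poisson equation for $\tilde L$ exactly: the $\partial_{T_x}h_i$ term in $\tilde L$ introduces a correction that must be estimated and absorbed into $M_i$ as a finite-variation contribution without spoiling the exponential bound. Equally delicate is the early phase where some $T_k$ is still small and the self-averaging has not kicked in: during that phase one must argue by compactness and use that it lasts only a finite (random) time before the quantitative estimates above take over.
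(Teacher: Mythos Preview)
Your skeleton is the paper's: your $h_i(x,T)=\int_0^\infty(p_s^T(x,i)-1/N)\,ds$ is, up to sign, the entry $Q(T)_{x,i}$ of $Q(T)=-\int_0^\infty(e^{uL(T)}-\1/N)\,du$ that the paper introduces, and the decomposition of $\phi_i(t)$ into $h_i(\Theta_t)-h_i(\Theta_0)$, a martingale $M_i$, and the finite-variation correction $-\int_0^t \partial_{T_{X_s}} h_i(\Theta_s)\,ds$ is exactly the paper's starting point.

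The gap is in how you propose to control the correction term and $\langle M_i\rangle$. Your plan uses the spectral-gap bound $|h_i(x,T)|\le Ce^{-2\min_k T_k}$ together with $\min_k T_k(t)\to\infty$. But divergence alone is not enough: once you differentiate $h_i$ in $T$ or compute the carr\'e du champ, the jump rates of order $e^{T_x+T_j}$ enter and your crude estimates produce integrands of order $e^{2\max_k T_k-4\min_k T_k}$. Making these integrable on $[0,\infty)$ requires $\min_k T_k(t)$ to grow linearly and $\max_k T_k-\min_k T_k$ to stay bounded, which is essentially the statement you are proving. Your Borel--Cantelli sketch (infinitely many visits plus a sojourn lower bound) yields only $T_k(t)\to\infty$, with no rate. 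Your final paragraph correctly flags this circularity but does not resolve it.

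The paper avoids any a priori growth estimate by an exact algebraic identity. Differentiating the Poisson equation gives
\[
\frac{\partial}{\partial T_i}Q(T)_{k,l}=\sum_{j\sim i}W_{ij}e^{T_i+T_j}\bigl(Q(T)_{i,k}-Q(T)_{j,k}\bigr)\bigl(Q(T)_{i,l}-Q(T)_{j,l}\bigr),
\]
so along the process $t\mapsto Q(T(t))_{l,l}$ is nondecreasing. Since $Q(T)_{l,l}\le 0$ always (it is $-N^{-1}$ times the mean hitting time of $l$ from a uniform start), the diagonal converges and its total increase is at most $-Q(0)_{l,l}$. Cauchy--Schwarz against this diagonal then bounds both the off-diagonal correction integral and, after a short calculation, shows $\langle M_l\rangle_\infty\le -Q(0)_{l,l}$. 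No information on $\min_k T_k(t)$ is ever needed.
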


\begin{theorem}
\label{meas}
Suppose that $G$ is finite and set $N= \vert V\vert$.

\noindent i) Under $\P_{i_0}$, $(U_i)_{i\in V}$ has the following density distribution on $\hhh_0=\{(u_i), \; \sum u_i=0\}$
\begin{eqnarray}\label{density}
{1\over (2\pi)^{(N-1)/2}} e^{u_{i_0}} e^{-H(W,u)} \sqrt{D(W,u)},
\end{eqnarray}
where
$$
H(W,u)=2 \sum_{\{i,j\}\in E} W_{i,j} \sinh^2\left({\demi(u_i-u_j)}\right)
$$
and $D(W,u)$ is any diagonal minor of the $N\times N$ matrix $M(W,u)$ with coefficients
$$
m_{i,j}=\left\{ \begin{array}{ll} W_{i,j} e^{u_i+u_j}  & \hbox{ if $i\neq j$}
\\
-\sum_{k\in V} W_{i,k} e^{u_i+u_k} &\hbox{ if $i=j$}
\end{array}\right.
$$

\noindent ii)  Let $C$, resp. $D$, be positive continuous additive functionals of $X$, resp. $Y$:
$$C(t)=\sum_{i\in V}(e^{2T_i(t)}-1),\;\;\; D(s)=\sum_{i\in V} L_i^2(s)-1$$
and let $$Z_t=X_{C^{-1}(t)} \; (= Y_{ D^{-1}(t)}).$$ 
 Then, conditionally on $(U_i)_{i\in V}$, $Z_t$  is a Markov jump process starting from  $i_0$, with jump rate from  $i$ to $j$
$$
\demi W_{i,j}e^{U_j-U_i}.
$$
In particular, the discrete time process associated with $(Y_s)_{s\ge0}$ is a mixture of reversible Markov chains with conductances
$W_{i,j} e^{U_i+U_j}$.
\end{theorem}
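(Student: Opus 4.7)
My plan is to prove parts (i) and (ii) simultaneously by computing trajectory densities on both sides and reducing the desired equality to a single integral identity.

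First, derive the jump rates of $Z$. From Lemma \ref{invtime} and the time change $\tau=C(t)=\sum_i(e^{2T_i(t)}-1)$, one reads off $T_i^Z(\tau)=e^{2T_i(C^{-1}(\tau))}-1$, so that with $S_i(\tau):=\sqrt{1+T_i^Z(\tau)}$ the process $Z$ jumps from $i$ to $j$ at rate $\tfrac12 W_{ij}S_j(\tau)/S_i(\tau)$. Computing the density of a trajectory of $Z$ with jumps at $\tau_1<\cdots<\tau_n$ and successive positions $i_0,\ldots,i_n$ gives
\begin{equation*}
L_Z=\Bigl(\prod_{k=1}^n \tfrac{W_{i_{k-1},i_k}}{2}\Bigr)\cdot\frac{S_{i_n}(\tau)}{\prod_{i\in V} S_i(\tau)}\cdot\exp\Bigl(-\sum_{\{i,j\}\in E}W_{ij}(S_i(\tau)S_j(\tau)-1)\Bigr).
\end{equation*}
Two telescopings drive this: the product $\prod_k S_{i_k}(\tau_k)/S_{i_{k-1}}(\tau_k)$ collapses because $S_i$ is continuous and constant outside visits to $i$ (every vertex other than $i_n=Z_\tau$ contributes a factor $1/S_i(\tau)$, while $i_n$ contributes $1$); the compensator integral simplifies edge by edge via $\tfrac{d}{ds}(S_iS_j)=\tfrac{S_j}{2S_i}\mathbf{1}_{Z_s=i}+\tfrac{S_i}{2S_j}\mathbf{1}_{Z_s=j}$, yielding $\int_0^\tau\bigl(\tfrac{S_j}{S_i}\mathbf{1}_{Z_s=i}+\tfrac{S_i}{S_j}\mathbf{1}_{Z_s=j}\bigr)ds=2(S_i(\tau)S_j(\tau)-1)$.

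Second, compare with the trajectory density of a Markov jump process starting at $i_0$ with rates $\tfrac12 W_{ij}e^{u_j-u_i}$, which after telescoping $\prod_k e^{u_{i_k}-u_{i_{k-1}}}=e^{u_{i_n}-u_{i_0}}$ and using $\int_0^\tau\mathbf{1}_{Z_s=i}\,ds=S_i(\tau)^2-1$ reads
\begin{equation*}
L_{\mathrm{MC}}(u)=\Bigl(\prod_{k=1}^n \tfrac{W_{i_{k-1},i_k}}{2}\Bigr)\cdot e^{u_{i_n}-u_{i_0}}\cdot\exp\Bigl(-\tfrac12\sum_i(S_i^2-1)\sum_j W_{ij}e^{u_j-u_i}\Bigr).
\end{equation*}
The mixture equality $L_Z=\int_{\hhh_0}L_{\mathrm{MC}}(u)\,\nu_{i_0}(du)$, with $\nu_{i_0}$ the candidate density from (i), is then checked via the change of variables $v_i:=u_i-\log S_i+\tfrac{1}{N}\sum_k\log S_k$ on $\hhh_0$ (which preserves the hyperplane with unit Jacobian). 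Setting $\bar W_{ij}:=W_{ij}S_iS_j$, a direct edge-by-edge computation yields
\begin{equation*}
H(W,u)+\tfrac12\sum_i(S_i^2-1)\sum_j W_{ij}e^{u_j-u_i}=H(\bar W,v)+\sum_{\{i,j\}\in E}W_{ij}(S_iS_j-1),
\end{equation*}
and since $M(\bar W,v)=s^2M(W,u)$ with $s:=(\prod_k S_k)^{1/N}$, one also has $\sqrt{D(W,u)}=\sqrt{D(\bar W,v)}/s^{N-1}$. Collecting factors (including $e^{u_{i_n}}=e^{v_{i_n}}S_{i_n}/s$ and $s^N=\prod_k S_k$), everything cancels down to the single identity
\begin{equation*}
\int_{\hhh_0} e^{v_{i_n}}\,e^{-H(\bar W,v)}\sqrt{D(\bar W,v)}\,dv=(2\pi)^{(N-1)/2}
\end{equation*}
required to hold for arbitrary positive weights $\bar W$ and arbitrary $i_n\in V$.

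This normalization identity---and crucially its independence of the reference vertex $i_n$---is the heart of the proof and my main obstacle. I would establish it either via the supersymmetric Ward identities of the $H^{2|2}$ sigma model of Disertori, Spencer and Zirnbauer \cite{dsz} (precisely the statistical-mechanics identification anticipated in the paper), or, in a more self-contained way, by induction on $|V|$: using the matrix-tree expansion $D(W,u)=\sum_T\prod_{\{i,j\}\in T}W_{ij}e^{u_i+u_j}$, integrate out a leaf vertex by a one-dimensional modified-Bessel-type integral and reduce to a graph of smaller size with modified weights. Granted this identity, (i) follows at once (the candidate density integrates to $1$), the likelihood equality $L_Z=\int L_{\mathrm{MC}}(u)\,\nu_{i_0}(du)$ gives the desired mixture representation in (ii), and the identification of the mixing variable with the limit $U$ from Proposition \ref{pconv} is forced by matching the ergodic asymptotics of the conditioned Markov chain.
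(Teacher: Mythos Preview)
Your trajectory-density computation and the change of variables $v_i=u_i-\log S_i+\tfrac1N\sum_k\log S_k$ are correct, and this is essentially the paper's proof of part~(ii): the paper defines $U$ with the density~\eqref{density}, builds the Markov chain $Z$ conditionally on $U$, and shows via the same change of variables that $\lll(U\mid\fff_t^Z)=\nu^{\lambda(t),Z_t}$ with $\lambda_i(t)=\sqrt{1+l_i(t)}$; the annealed jump rates then become $\tfrac12 W_{ij}\lambda_j/\lambda_i$, and a time change recovers the VRJP. Your computation is the same argument run in the opposite direction.

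Where you genuinely diverge from the paper is part~(i). You reduce everything to the normalization identity
\[
\int_{\hhh_0} e^{v_{j}}\,e^{-H(\bar W,v)}\sqrt{D(\bar W,v)}\,dv=(2\pi)^{(N-1)/2}
\]
and propose to establish it externally, via supersymmetry or an induction on $|V|$. The paper deliberately \emph{avoids} this: it proves~(i) by a Feynman--Kac argument. One sets $\Psi(i_0,T,\lambda)=\int_{\hhh_0}e^{u_{i_0}}e^{i\langle\lambda,u\rangle}e^{-H(W^T,u)}\sqrt{D(W^T,u)}\,du$, checks that $\Psi$ solves $i\lambda_{i_0}\Psi+\tilde L\Psi=0$, so that $e^{i\langle\lambda,\overline T(t)\rangle}\Psi(X_t,T(t),\lambda)$ is a bounded martingale, and then shows $\Psi(X_t,T(t),\lambda)\to(2\pi)^{(N-1)/2}$ a.s.\ by a Gaussian rescaling using Proposition~\ref{pconv}. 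This yields $\E_{i_0}(e^{i\langle\lambda,U\rangle})=(2\pi)^{-(N-1)/2}\Psi(i_0,0,\lambda)$ directly, and the normalization identity drops out as a corollary (set $\lambda=0$). The paper's Remark after Theorem~\ref{meas} makes this point explicitly: the fact that~\eqref{density} is a density is obtained \emph{probabilistically}, not assumed.

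Your supersymmetric route is legitimate (and the paper acknowledges it as an alternative), but your induction sketch is doubtful as stated: a general finite graph need not have a leaf vertex, and the square root of the spanning-tree sum does not decouple from $e^{-H}$ in a way that makes a one-variable Bessel-type integral yield a clean recursion. If you want a self-contained argument, the Feynman--Kac route is what actually closes the loop.
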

\noindent N.B.: 1) the density distribution in \eqref{density} is with respect to the Lebesgue measure on $\hhh_0$ which is
$\prod_{i\in V\setminus\{j_0\}} du_i$ for any choice of $j_0$ in $V$. We simple write $du$ for any of the $\prod_{i\in V\setminus\{j_0\}} du_i$.

\noindent 2) The diagonal minors of the matrix $M(W,u)$ are all equal since the sum on any line or column
of the coefficients of the matrix are null. By the matrix-tree theorem, if we let $\T$ be the set of spanning trees of $(V,E,\sim)$, then $D(W,u)=\sum_{T\in\T}\prod_{\{i,j\}\in\T}W_{\{i,j\}}e^{u_i+u_j}$.
\begin{remark} Remark that usually
a result like ii) makes use of de Finetti's theorem: here, we provide a direct proof exploiting the explicit form of the density.
In Section 5, we apply Theorem \ref{annealed} and Theorem \ref{meas} i) ii) to give a new proof of Diaconis-Coppersmith formula including its de Finetti part.  
\end{remark}
\begin{remark}
The fact that (\ref{density}) is a density is not at all obvious. Our argument is probabilistic: (\ref{density}) is the law of the
random variables $(U_i)$. It can also be explained directly as a consequence of supersymmetry, see (5.1) in \cite{dsz}. The fact that
the measure (\ref{density}) normalizes at 1 is a fundamental property, which plays a crucial role in the localization and delocalization results of Disertori and Spencer
\cite{ds,dsz}.
\end{remark}
\begin{remark} \label{r3}
ii) implies that the VRJP $(Y_s)$ is a mixture of Markov jump processes. More precisely, let $(U_i)_{i\in V}$ be a random variable 
distributed according to 
(\ref{density}) and, conditionally on $U$, $Z$ be the Markov jump process with jump rates from $i$ to $j$ given by 
$\demi W_{i,j} e^{U_j-U_i}$. Then the time changed process $(Z_{B^{-1}}(s))_{s\ge 0}$ with
$$ B(t)= \sum_{i\in V} \sqrt{1+l_i^Z(t)}-1,
$$
where $(l_i^Z(t))$ is the local time of $Z$ at time $t$, has the law of the VRJP $(Y_s)$ with conductances $W$.
\end{remark}  

%\begin{corollary}
%\label{cd}
%*****Coppersmith-Diaconis measure*****
%\end{corollary}

\section{Proof of the Proposition \ref{pconv} and Theorem \ref{meas}}
\subsection{Proof of Proposition \ref{pconv}}

\renewcommand{\e}{\varepsilon}
\renewcommand{\te}{\tilde{\varepsilon}}

By a slight abuse of notation, we also use notation $L(T)$ for the $N\times N$ matrix $M(W^T,T)$ of that operator in the canonical basis. Let $\1$ be the $N\times N$ matrix with coefficients equal to $1$, i.e. $\1_{i,j}=1$ for all $i$, $j$ $\in V$, and let $I$ be the identity matrix. 

Let us define, for all $T\in \R^V$, 
\begin{eqnarray}\label{QT}
Q(T):=-\int_0^\iy \left(e^{uL(T)}-\frac{\1}{N}\right)\,du,
\end{eqnarray}
which exists since $e^{uL(T)}$ converges towards $\1/N$ at exponential rate.

Then $Q(T)$ is a solution of the Poisson equation for the Markov Chain $L(T)$, namely
$$L(T)Q(T)=Q(T)L(T)=I-\frac{\1}{N}.$$

Observe that $L(T)$ is symmetric, and thus $Q(T)$ as well.  

For all $T\in\R^V$ and $i$, $j$ $\in V$, let $E_i^T(\tau_j)$ denote the expectation of  the first hitting time of site $j$ for the continuous-time process with generator $L(T)$. Then
$$Q(T)_{i,j}=\frac{1}{N}E_i^T(\tau_j)+Q(T)_{j,j}$$
by the strong Markov property applied to (\ref{QT}).  As a consequence, $Q(T)_{j,j}$ is nonpositive for all $j$, using $\sum_{i\in V} Q(T)_{i,j}=0$.

Let us fix $l\in V$. We want to study the asymptotics of $T_l(t)-t/N$ as $t\to\iy$:
\begin{align}
\nonumber
T_l(t)-\frac{t}{N}&=\int_0^t \left(\1_{\{X_u=l\}}-\frac{1}{N}\right)\,du=\int_0^t (L(T(u))Q(T(u)))_{X_u,l}\,du\\
\nonumber &=\int_0^t \tL (Q(.)_{.,l})(X_u,T(u))\,du-\int_0^t \frac{\partial}{\partial T_{X_u}}Q(T(u))_{X_u,l}\,du\\
\label{poisson} &=Q(T(t))_{X_t,l}-Q(0)_{X_0,l}+M_l(t)-\int_0^t \frac{\partial}{\partial T_{X_u}}Q(T(u))_{X_u,l}\,du,
\end{align}
where $$M_l(t):=-Q(T(t))_{X_t,l}+Q(0)_{X_0,l}+\int_0^t \tL (Q(.)_{.,l})(X_u,T(u))\,du$$
is a martingale for all $l$. Recall that $\tL$ is the generator of $(X_t,T(t))$.

The following lemma shows in particular the convergence of $Q(T(t))_{k,l}$ for all $k$, $l$, as $t$ goes to infinity. It is a purely determistic statement, which does not depend on the trajectory of the process $X_t$ (as long as it only performs finitely many jumps in a finite time interval), but only on the added local time in $W^T$.
\begin{lemma}
For all $k$, $l$ $\in V$, $Q(T(t))_{k,l}$  converges as $t$ goes to infinity, and
$$\int_0^\iy\left|\frac{\partial}{\partial T_{X_u}}Q(T(u))_{X_u,l}\right|\,du<\iy.$$
\end{lemma}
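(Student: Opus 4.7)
The plan is to reduce both claims to monotonicity of hitting-time expectations for the frozen chain $L(T)$. First I would combine the strong-Markov identity $Q(T)_{i,j}=\tfrac{1}{N}E_i^T[\tau_j]+Q(T)_{j,j}$ (read off from the definition of $Q(T)$ by splitting the integral at the first hitting time) with the row-sum identity $\sum_i Q(T)_{i,l}=0$. This gives $Q(T)_{l,l}=-\tfrac{1}{N^2}\sum_i E_i^T[\tau_l]$ and hence
$$
Q(T)_{k,l}=\tfrac{1}{N}E_k^T[\tau_l]-\tfrac{1}{N^2}\sum_{i\in V} E_i^T[\tau_l],
$$
so both claims reduce to controlling the hitting times $h^T_k:=E_k^T[\tau_l]$ and their sum $\sum_k h^T_k$ along the trajectory $T=T(t)$.

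The key computation is $\partial_j(-L(T))=\widetilde M_j(T)$, where $\widetilde M_j(T)$ is the positive semidefinite \emph{star Laplacian at }$j$ (supported on the edges incident to $j$, with conductances $W_{j,m}e^{T_j+T_m}$). Differentiating the Dirichlet problem $(-L^{(l)}(T))h^T=\mathbf{1}$, $h^T_l=0$, in $T_j$ then yields
$$
\partial_j h^T=-G^{(l),T}\widetilde M_j(T)h^T,\qquad \partial_j\!\sum_i h^T_i=-\langle h^T,\widetilde M_j(T)h^T\rangle\leq 0,
$$
where $G^{(l),T}:=(-L^{(l)}(T))^{-1}$ is the entrywise nonnegative Green's kernel killed at $l$. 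The PSD-ness of $\widetilde M_j$ makes the sum-monotonicity immediate; the entrywise bound $\partial_j h^T_k\leq 0$ that I also need would follow from a comparison/maximum-principle argument adapted to the star-Laplacian perturbation, using the nonnegativity of $h^T$ and $G^{(l),T}$. I expect this entrywise statement to be the main technical step, since the PSD argument only transfers directly to the sum $\sum_i h^T_i$. Granting it, both $t\mapsto h^{T(t)}_k$ and $t\mapsto\sum_i h^{T(t)}_i$ are monotone non-increasing and nonnegative (because each $T_i(t)$ is non-decreasing in $t$), hence converge as $t\to\infty$, and substitution in the identity above proves the first claim.

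For the second claim, the chain rule gives $\tfrac{d}{du}f(T(u))=\tfrac{\partial}{\partial T_{X_u}}f(T(u))$ for every smooth $f$, because $\tfrac{d}{du}T_i(u)=\indic_{\{X_u=i\}}$. Applied with $f=Q(\cdot)_{l,l}$ and $f=h^{\cdot}_k$ for each fixed $k$, and using the constant signs of the derivatives established above, the total variations telescope:
$$
\int_0^\infty\!\Bigl|\tfrac{d}{du}Q(T(u))_{l,l}\Bigr|du=\lim_{u\to\infty}Q(T(u))_{l,l}-Q(T(0))_{l,l},\qquad\int_0^\infty\!\Bigl|\tfrac{d}{du}h^{T(u)}_k\Bigr|du=h^{T(0)}_k-\lim_{u\to\infty}h^{T(u)}_k,
$$
both finite. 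Decomposing the trajectory integral by the value of $X_u$ and using $\partial_j Q(T)_{k,l}=\tfrac{1}{N}\partial_jh^T_k+\partial_jQ(T)_{l,l}$ finally gives
$$
\int_0^\infty\!\Bigl|\tfrac{\partial}{\partial T_{X_u}}Q(T(u))_{X_u,l}\Bigr|du\leq\tfrac{1}{N}\sum_{k\in V}\!\int_0^\infty\!\Bigl|\tfrac{d}{du}h^{T(u)}_k\Bigr|du+\int_0^\infty\!\Bigl|\tfrac{d}{du}Q(T(u))_{l,l}\Bigr|du<\infty,
$$
which is the bound asserted by the lemma.
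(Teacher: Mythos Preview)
Your approach has a genuine and fatal gap: the entrywise monotonicity $\partial_j h_k^T\le 0$ that you admit is ``the main technical step'' is simply \emph{false}. Here is a counterexample. Take $V=\{l,a,b,j,k\}$ with edges $\{l,a\},\{a,b\},\{b,j\},\{a,k\},\{k,j\}$ (so $a,b,j,k$ form a $4$-cycle with $l$ attached at $a$), conductances $W_{l,a}=W_{a,b}=W_{b,j}=W_{k,j}=1$ and $W_{a,k}=10$, and $T\equiv 0$. Solving $(-L^{(l)})h=\mathbf 1$ with $s:=e^{T_j}$ (so that the two edges at $j$ have rates $s$) gives
\[
h_k \;=\; E_k^T[\tau_l]\;=\;4+\frac{3(1+s)}{20+11s},
\]
whose $s$-derivative is $27/(20+11s)^2>0$. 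Thus $\partial_{T_j}E_k^T[\tau_l]>0$, and no maximum-principle argument can rescue the claim. Since both halves of your argument (convergence of $h_k^{T(t)}$ and the total-variation bound on $\int|\tfrac{d}{du}h_k^{T(u)}|du$) rest on this monotonicity, the proof collapses.

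What survives is the part you did establish cleanly: $\partial_j\sum_i h_i^T=-\langle h^T,\tilde M_j h^T\rangle\le 0$, equivalently $t\mapsto Q(T(t))_{l,l}$ is nondecreasing and bounded above by $0$, hence convergent. This is exactly the starting point of the paper's argument. The paper then bypasses any entrywise monotonicity by computing
\[
\frac{\partial}{\partial T_i}Q(T)_{k,l}=\sum_{m\sim i}W_{i,m}^T\,Q(T)_{\nabla_{i,m},k}\,Q(T)_{\nabla_{i,m},l},
\]
recognising $\tfrac{d}{dt}Q(T(t))_{l,l}$ as a sum of squares, and using Cauchy--Schwarz on this bilinear expression to show that $Q(T(t))_{k,l}$ is Cauchy and that the integral of $|\partial_{T_{X_u}}Q(T(u))_{X_u,l}|$ is controlled by the (finite) increments of the diagonal terms $Q(T(\cdot))_{m,m}$. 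That Cauchy--Schwarz step is precisely what replaces your unattainable pointwise sign.
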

\begin{proof}
For all $i$, $k$, $l$ $\in V$, let us compute 
$\frac{\partial}{\partial T_i}Q(T)_{k,l}$ : by differentiation of the Poisson equation, 
$$\frac{\partial}{\partial T_i}Q(T)_{k,l}=-\left(Q(T)\left(\frac{\partial}{\partial T_i}L\right)Q(T)\right)_{k,l}.$$
Now, for any real function $f$ on $V$, 
$$\frac{\partial}{\partial T_i}Lf(k)
=
\begin{cases}
\sum_{j\sim i}W_{i,j}^T(f(j)-f(i))&\tx{ if }k=i\\
W_{i,k}^T(f(i)-f(k))&\tx{ if }k\sim i, k\not=i\\
0&\tx{ otherwise.}
\end{cases}
$$
Hence
$$\frac{\partial}{\partial T_i}Lf(k)
=\sum_{j\sim i}W_{i,j}^T(f(j)-f(i))(\1_{\{i=k\}}-\1_{\{j=k\}})$$
and, therefore,
\begin{align}
\nonumber
\frac{\partial}{\partial T_i}Q(T)_{k,l}&=\sum_{j\sim i}W_{i,j}^T(Q(T)_{k,i}-Q(T)_{k,j})(Q(T)_{i,l}-Q(T)_{j,l})\\
&\label{derivq0}=\sum_{j\sim i}W_{i,j}^TQ(T)_{k,\nabla_{i,j}}Q(T)_{\nabla_{i,j},l}=\sum_{j\sim i}W_{i,j}^TQ(T)_{\nabla_{i,j},k}Q(T)_{\nabla_{i,j},l},
\end{align}
where we use the notation $f(\nabla_{i,j}):=f(j)-f(i)$ in the second equality, and the fact that $Q(T)$ is symmetric in the third one. 

In particular, for all $l\in V$ and $t\ge0$, 
\begin{equation}
\label{derivq}
\frac{d}{dt}Q(T(t))_{l,l}=\frac{\partial}{\partial T_{X_t}}Q(T(t))_{l,l}=\sum_{j\sim X_t}W_{X_t,j}\left(Q(T(t))_{\nabla_{X_t,j},l}\right)^2.
\end{equation}
Now recall that $Q(T(t))_{l,l}$ is nonpositive for all $t\ge0$; therefore it must converge, and 
$$\int_0^\iy \sum_{j\sim X_t}W_{X_t,j}\left(Q(T(t))_{\nabla_{X_t,j},l}\right)^2\,dt=(Q(T(\iy))-Q(0))_{l,l}<\iy.$$
The convergence of $Q(T(t))_{k,l}$ now follows from Cauchy-Schwarz inequality, using \eqref{derivq0}: for all $t\ge s$,
\begin{align*}
\left|(Q(T(t))-Q(T(s)))_{k,l}\right|
&=\int_s^t \sum_{j\sim X_u}W_{X_u,j}^TQ(T(u))_{\nabla_{X_u,j},k}Q(T(u))_{\nabla_{X_u,j},l}\,du\\
&\le\sqrt{(Q(T(t))-Q(T(s)))_{k,k}}\sqrt{(Q(T(t))-Q(T(s)))_{l,l}};
\end{align*}
thus $Q(T(t))_{k,l}$ is Cauchy sequence, which converges as $t$ goes to infinity. Now, using again Cauchy-Schwarz inequality,
\begin{align*}
&\int_0^\iy\left|\frac{\partial}{\partial T_{X_u}}Q(T(u))_{X_u,l}\right|\,du
\\
&=\int_0^\iy\left| \sum_{j\sim X_u}W_{X_u,j}^TQ(T(u))_{\nabla_{X_u,j},X_u}Q(T(u))_{\nabla_{X_u,j},l}\right| \,du\\
&\le\sqrt{\sum_{k\in V}(Q(T(\infty))-Q(T(0)))_{k,k}}\sqrt{(Q(T(\infty))-Q(T(0)))_{l,l}},
\end{align*}
which enables us to conclude.
\end{proof}
Next, we show that $(M_l(t))_{t\ge0}$ converges, which will complete the proof: indeed, this implies that the size of the jumps in that martingale goes to $0$ a.s., and therefore, by (\ref{poisson}), that $Q(T(t))_{X_t,l}$ must converge as well, again by (\ref{poisson}).

Let us compute the quadratic variation of the 
martingale $(M_l(t))_{t\ge0}$ at time $t$:
\begin{align*}
&\left(\frac{d}{d\e}\Es\left((M_l(T(t+\e))-M_l(t))^2|\F_t\right)\right)_{\e=0}\\
&=\left(\frac{d}{d\e}\Es\left((Q(T(t+\e))_{X_{t+\e},l}-Q(T(t))_{X_{t},l})^2|\F_t\right)\right)_{\e=0}\\
&=R(T(t))_{X_t,l}
\end{align*}
where, for all $(i,l,T)\in V\times V\times\R^V$, we let  
$$R(T)_{i,l}:=\tL(Q^2(.)_{.,l})(i,T)-2Q(T)_{i,l}\tL(Q(.)_{.,l})(i,T);$$
here $Q^2(T)$ denotes the matrix with coefficients $(Q(T)_{i,j})^2$, rather than $Q(T)$ composed with itself.
But
\begin{align*}
\tL(Q^2(.)_{.,l})(i,T)&=2\left(Q(T)\right)_{i,l}\left(\frac{\partial}{\partial T_{i}}Q(T)\right)_{i,l}+\left(L(T)Q^2(T)_{.,l}(i)\right)_{i,l}\\
Q(T)_{i,l}\tL(Q(.)_{.,l})(i,T)&=\left(Q(T)\right)_{i,l}\left(\frac{\partial}{\partial T_{i}}Q(T)\right)_{i,l}+Q(T)_{i,l}(L(T)Q(T)_{.,l}(i))_{i,l},
\end{align*}
so that
\begin{align*}
R(T)_{i,l}&=L(T)(Q^2(T)_{.,l})_{i,l}-2Q(T)_{i,l}(L(T)Q(T)_{.,l})_{i,l}\\
&=\sum_{j\sim i}W_{i,j}^T\left((Q(T)_{j,l})^2-(Q(T)_{i,l})^2\right)
-2Q(T)_{i,l}\sum_{j\sim i}W_{i,j}^T\left(Q(T)_{j,l}-Q(T)_{i,l}\right)\\
&=\sum_{j\sim i}W_{i,j}^T\left(Q(T)_{\nabla_{i,j},l}\right)^2=\frac{\partial}{\partial T_i}Q(T)_{l,l},
\end{align*}
using \eqref{derivq0} in the last equality. Thus
$$<M_l,M_l>_\iy=\int_0^\iy\frac{d}{du}Q(u)_{l,l}\,du=Q(T(\iy))_{l,l}-Q(0)_{l,l}\le-Q(0)_{l,l}<\iy.$$
Therefore $(M_l(t))_{t\ge0}$ is a martingale bounded in $L^2$, which converges a.s.
\begin{remark}
Once we know that $T_i(t)-t/N$ converges, then $T_i(\infty)=\infty$ for all $i\in V$, hence $Q(T(\iy))_{l,l}=0$, 
and the last inequality is in fact an equality, i.e.  $<M_l,M_l>_\iy=-Q(0)_{l,l}$.
\end{remark}

\subsection{Proof of Theorem \ref{meas} i)}
We consider, for $i_0\in V$, $T\in \R^V$, $\lambda \in \hhh_0$
\begin{eqnarray}\label{Psi}
\Psi(i_0, T, \lambda)=\int_{\hhh_0} e^{u_{i_0}} e^{i<\lambda, u>} \phi(W^T,u) du,
\end{eqnarray}
where
\begin{eqnarray}
\label{notphi}
\phi(W^T,u)= e^{-H(W^T,u)} \sqrt{ D(W^T,u)},
\end{eqnarray}
and $W^T_{i,j}= W_{i,j} e^{T_i+T_j}$.
We will prove that
$$
{1\over \sqrt{2\pi}^{N-1}} \Psi(i_0, T, \lambda) =\E_{i_0,T}\left( e^{i<\lambda, U>}\right),
$$
for all $i_0\in V$, $T\in \R^V$.
\begin{lemma}
The function $\Psi$ is solution of the Feynman-Kac equation
$$
i\lambda_{i_0} \Psi(i_0, T , \lambda) +(\tilde L \Psi)(i_0,T,\lambda)=0.
$$
\end{lemma}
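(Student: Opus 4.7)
The plan is to verify the first-order PDE by reducing it to a pointwise identity on the integrand and then using tangential integration by parts on the hyperplane $\hhh_0$. Differentiating under the integral sign and substituting the definitions of $\Psi$ and $\tilde L$, the equation becomes
$$
\int_{\hhh_0} e^{i\langle\lambda,u\rangle}\left[i\lambda_{i_0}e^{u_{i_0}}\phi + e^{u_{i_0}}\partial_{T_{i_0}}\phi + \sum_{j\sim i_0}W_{i_0,j}e^{T_{i_0}+T_j}(e^{u_j}-e^{u_{i_0}})\phi\right]du=0,
$$
with $\phi=\phi(W^T,u)$. The strategy is first to convert the $T_{i_0}$-derivative and jump part into a single $u_{i_0}$-derivative via a pointwise identity, and then to exploit tangential integration by parts in order to cancel the $i\lambda_{i_0}$ term.

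The central step is the pointwise identity
$$
e^{u_{i_0}}\partial_{T_{i_0}}\phi + \sum_{j\sim i_0}W_{i_0,j}e^{T_{i_0}+T_j}(e^{u_j}-e^{u_{i_0}})\phi=e^{u_{i_0}}\partial_{u_{i_0}}\phi.
$$
Writing $\phi=e^{-H}\sqrt D$, this splits into two observations. First, each edge weight in $D$ enters in the combination $W_{i,j}e^{T_i+T_j+u_i+u_j}$, so $\partial_{T_{i_0}}D=\partial_{u_{i_0}}D$ pointwise by the chain rule, and the $\sqrt D$ contributions on both sides cancel. Second, for the $H$ piece, a direct calculation together with the elementary identity $\cosh(x)-1-\sinh(x)=e^{-x}-1$ yields $\partial_{T_{i_0}}H-\partial_{u_{i_0}}H=\sum_{j\sim i_0}W_{i_0,j}e^{T_{i_0}+T_j}(e^{u_j-u_{i_0}}-1)$, and multiplying by $-\phi e^{u_{i_0}}$ reproduces the negative of the jump sum.

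Substituting the pointwise identity, the equation reduces to the vanishing of $\int_{\hhh_0}e^{i\langle\lambda,u\rangle}[i\lambda_{i_0}e^{u_{i_0}}\phi+e^{u_{i_0}}\partial_{u_{i_0}}\phi]du$. I would integrate by parts along the tangent directions $e_{i_0}-e_k$ of $\hhh_0$: since these preserve $\sum_k u_k=0$, for each $k\in V$ one has
$$
\int_{\hhh_0}(\partial_{u_{i_0}}-\partial_{u_k})\left(e^{u_{i_0}+i\langle\lambda,u\rangle}\phi\right)du=0.
$$
Summing over $k\in V$, using $\sum_k\lambda_k=0$ (so $\sum_{k\neq i_0}\lambda_k=-\lambda_{i_0}$) and the homogeneity $\sum_k\partial_{u_k}\phi=(N-1)\phi$, which itself follows from $\phi(W,u+c\mathbf 1)=e^{c(N-1)}\phi(W,u)$, the algebra collapses to exactly $N\int_{\hhh_0}e^{u_{i_0}+i\langle\lambda,u\rangle}[i\lambda_{i_0}\phi+\partial_{u_{i_0}}\phi]\,du=0$, which is the desired identity.

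The main obstacle is recognising the structural symmetry $\partial_{T_{i_0}}D=\partial_{u_{i_0}}D$: without it, handling $\partial_{T_{i_0}}\sqrt D$ would require the matrix-tree theorem and explicit effective-resistance computations, and matching these against the jump term would be delicate. With this observation, the rest of the proof is elementary, and the combination of the shift-homogeneity of $\phi$ with the tangency of $e_{i_0}-e_k$ to $\hhh_0$ makes the final integration by parts entirely transparent.
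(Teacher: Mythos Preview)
Your proof is correct, but it follows a genuinely different route from the paper's.

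The paper performs the change of variables $\tilde u_i=u_i+\overline T_i$ with $\overline T_i=T_i-\tfrac1N\sum_j T_j$, which rewrites $\Psi$ as an integral in which $\sqrt{D(W,\tilde u)}$ no longer depends on $T$ at all. After that, differentiating in $T_{i_0}$ only touches the factor $e^{\sum_j T_j}e^{\tilde u_{i_0}-T_{i_0}}e^{i\langle\lambda,\tilde u-T\rangle}e^{-H(W^T,\tilde u-T)}$; the contributions from $e^{\sum_j T_j}$ and $e^{-T_{i_0}}$ cancel, and a short computation of $\partial_{T_{i_0}}H(W^T,\tilde u-T)$ identifies the remainder as $-i\lambda_{i_0}\Psi-(L(T)\Psi)(i_0,T,\lambda)$ directly, with no integration by parts.

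You exploit the same underlying symmetry---that $T$ and $u$ enter $D(W^T,u)$ only through $T+u$---but pointwise, via $\partial_{T_{i_0}}D=\partial_{u_{i_0}}D$, and then convert the resulting $u_{i_0}$-derivative into the $i\lambda_{i_0}$ term by tangential integration by parts on $\hhh_0$ together with the Euler relation $\sum_k\partial_{u_k}\phi=(N-1)\phi$. This is a legitimate and self-contained argument; the cost is the extra integration-by-parts step and the homogeneity computation, whereas the paper's change of variables collapses both of these into a single substitution. Conversely, your approach makes the role of the $T\leftrightarrow u$ symmetry in $D$ completely explicit, which the paper's proof leaves somewhat implicit inside the change of variables.
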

\begin{proof}
Let $\overline T_i = T_i -{1\over N} \sum_{j\in V} T_j$. With the change of variables
$ \tilde u_i= u_i+\overline T_i$, we obtain
\begin{eqnarray}\label{Psitilde}
\Psi(i_0, T, \lambda)=\int_{\hhh_0} e^{\tilde u_{i_0}-\overline T_{i_0}} e^{i<\lambda, \tilde u-\overline T>} \phi(W^T,\tilde u- \overline T) d \tilde u
\end{eqnarray}
Remark now that $H(W^T, \tilde u -\overline T)= H(W^T, \tilde u -T)$ since $H(W^T,u)$ only depends on the differences $u_i-u_j$.
We observe that the coefficients of the matrix $M(W^T,u)$ only contain terms of the form $W_{i,j}e^{u_i+T_i+u_j+T_j}$, hence
$$
\sqrt{ D(W^T, \tilde u -\overline T)}= e^{{N-1\over N} \sum_j T_j} \sqrt{D(W,\tilde u)}.
$$
Finally, $<\lambda, \overline T>=<\lambda, T>$ since $\lambda\in \hhh_0$. This implies that
 \begin{eqnarray}\label{Psitilde2}
\Psi(i_0, T, \lambda)=\int_{\hhh_0} e^{\sum_j T_j} e^{\tilde u_{i_0}- T_{i_0}} e^{i<\lambda, \tilde u- T>} e^{-H(W^T,\tilde u -T)}  \sqrt{D(W,\tilde u)}  d\tilde u.
\end{eqnarray}
We have
\begin{eqnarray*}
&&{\partial \over \partial T_{i_0}} H(W^T, \tilde u -T)
\\
&=&
{\partial \over \partial T_{i_0}}\left( 2 \sum_{\{i,j\} \in E } W_{i,j} e^{T_i+T_j}\sinh^2\left( \demi(\tilde u_i-\tilde u_j -T_i+T_j)\right)\right)
\\
&=&
2\sum_{j\sim i_0} W_{i_0,j} e^{T_{i_0}+T_j}\left( \sinh^2\left(\demi( \tilde u_{i_0}-\tilde u_j -T_{i_0}+T_j)\right) -\demi\sinh\left(\tilde u_{i_0}-\tilde u_j -T_{i_0}+T_j\right)\right)
\\
&=&
\sum_{j\sim i_0} W_{i_0,j} e^{T_{i_0}+T_j}\left( e^{-\tilde u_{i_0}+\tilde u_j +T_{i_0}-T_j} -1\right)
\\
&=&
e^{-(\tilde u_{i_0}-T_{i_0})}L(T)(e^{\tilde u-T})(i_0).
\end{eqnarray*}
Hence,
\begin{eqnarray*}
&&-{\partial \over \partial T_{i_0}}\Psi(i_0, T, \lambda)
\\
&=&
\int_{\hhh_0}\left(i\lambda_{i_0}e^{\tilde u_{i_0}-T_{i_0}}+L(T)(e^{\tilde u-T})(i_0)\right) e^{\sum_j T_j} e^{i<\lambda, \tilde u-T>} e^{-H(W^T,\tilde u- T)} \sqrt{ D(W,\tilde u)} d\tilde u
\\
&=&
i\lambda_{i_0} \Psi(i_0,T,\lambda) +(L(T)\Psi)(i_0,T,\lambda).
\end{eqnarray*}
This gives
\begin{eqnarray*}
(\tilde L \Psi)(i_0, T, \lambda) = -i\lambda_{i_0} \Psi(i_0,T,\lambda).
\end{eqnarray*}

\end{proof}
Since $\Psi$ is a solution of the Feynman-Kac equation we deduce that for all $t>0$, $i_0\in V$, $\la\in  \hhh_0$, $T\in \R^V$, 
$$
\Psi(i_0,T,\lambda)=\E_{i_0,T}\left( e^{i<\lambda, \overline T (t)>} \Psi(X_t, T(t), \lambda)\right),
$$
where we recall that $\overline T_i (t)= T_i(t)-t/N $. 
Let us now prove that $\Psi(X_t, T(t), \lambda)$ is dominated and that
$\P_{i_0}$
a.s.
\begin{eqnarray}\label{cv}
\lim_{t\to \infty}  \Psi(X_t, T(t), \lambda) =\sqrt{2\pi}^{N-1}.
\end{eqnarray}
By the matrix-tree theorem, we have, denoting by $\ttt $ the set of spanning trees of $G$, and using again notation $\phi$ in \eqref{notphi},
\begin{eqnarray}\label{domination}
\nonumber e^{u_{i_0}} \phi(W^T,u)
&=&
e^{u_{i_0}} e^{-H(W^T,u)} \sqrt{ \sum_{\Lambda \in \ttt} \prod_{\{i,j\}\in \Lambda} W^T_{i,j} e^{u_i+u_j}}
\\
\nonumber &\le&
e^{N\max_{i\in V} \vert u_i\vert }e^{-\demi\sum_{\{i,j\} \in V} W^T_{i,j} (u_i-u_j)^2}\sqrt{ D(W^T,0)}
\\
&\le &
\left( \sum_{i\in V}  e^{N u_i}+e^{-Nu_i}\right) e^{-\demi\sum_{\{i,j\} \in V} W^T_{i,j} (u_i-u_j)^2}\sqrt{ D(W^T,0)}
\end{eqnarray}
This is a gaussian integrand: for any real $a$ and $i_0\in V$,
\begin{eqnarray*}
&&\int_{\hhh_0} e^{a u_{i_0}} e^{-\demi\sum_{\{i,j\} \in V} W^T_{i,j} (u_i-u_j)^2}\sqrt{ D(W^T,0)} du
\\
&=&
e^{-{\demi a^2} Q(T)_{i_0,i_0}}\int e^{-\demi<U-aQ(T)_{i_0,\cdot}, L(T)(U-aQ(T)_{i_0,\cdot})>}\sqrt{ D(W^T,0)} du
\\
&=& 
e^{-{\demi a^2} Q(T)_{i_0,i_0}} (2\pi)^{(N-1)/2}.
\end{eqnarray*}
where $Q(T)$ is defined at the beginning of Section 4.1.
Therefore for all $i_0\in V$, $(T_i)\in \R^V$
$$
\vert \Psi(i_0,T,\lambda)\vert \le 2 \sum_{i\in V}  (2\pi)^{(N-1)/2} e^{-\demi N^2 Q(T)_{i,i}},
$$
Using \eqref{derivq}, $Q(T(t))_{i,i}$ increases in $t$, hence
$$\vert \Psi(X_t, T(t), \lambda)\vert \le 2 \sum_{i\in V}  (2\pi)^{(N-1)/2} e^{-\demi N^2 Q(0)_{i,i}},$$
for all $t\ge 0$. Let us prove now (\ref{cv}). We have
\begin{eqnarray*}
&&\Psi(X_t, T(t), \lambda) 
\\
&=&
\int e^{i<\lambda,u>} e^{u_{X_t}} e^{-2\sum_{\{i,j\} \in E} W^{T(t)}_{i,j} \sinh^2(\demi(u_i-u_j))}\sqrt{D(W^{T(t)},u)} du
\\
&=&
\int e^{i<\lambda,u>} e^{u_{X_t}} e^{-2\sum_{\{i,j\} \in E} e^{2t/N} W^{\overline T(t)}_{i,j} \sinh^2(\demi(u_i-u_j))}\sqrt{D(W^{\overline T(t)},u)} e^{(N-1)t/N}du.
\end{eqnarray*}
Changing to variables $\tilde u_i=e^{t/N} u_i$, we deduce that $\Psi(X_t, T(t), \lambda)$ equals
\begin{eqnarray*}
\int e^{i<\lambda, e^{-t/N}\tilde u>} e^{e^{-t/N} \tilde u_{X_t}} e^{-2\sum_{\{i,j\} \in E} W^{\overline T(t)}_{i,j} e^{2t/N} \sinh^2(\demi e^{-t/N} (\tilde u_i-\tilde u_j))}
\sqrt{D(W^{\overline T(t)},e^{-t/N} \tilde u)} d\tilde u.
\end{eqnarray*}
Since $\lim_{t\to \infty} \overline T_i(t)=U_i$, the integrand converges pointwise to the Gaussian integrand
$$
e^{-\demi \sum_{\{i,j\} \in V} W^{U}_{i,j} (\tilde u_i-\tilde u_j)^2}
\sqrt{D(W^{U},0 )},
$$
whose integral is $\sqrt{2\pi}^{N-1}$.
Consider $\overline U_i=\sup_{t\ge 0} \overline T_i(t)$ and $\underline U_i= \inf_{t\ge 0} \overline T_i(t)$. Proceeding as in
(\ref{domination}) the integrand is dominated for all $t$ by
\begin{eqnarray*}
&&e^{Ne^{-t/N} \max_{i\in V} \vert \tilde u_i\vert }e^{-\demi \sum_{\{i,j\} \in V} W^{\overline T(t) }_{i,j} (\tilde u_i-\tilde u_j)^2}
\sqrt{D(W^{\overline T(t)},0 )}
\\
&\le &
\left( \sum_{i\in V}  e^{N \tilde u_i}+e^{-N\tilde u_i}\right)e^{-\demi \sum_{\{i,j\} \in V} W^{\underline U}_{i,j} (\tilde u_i-\tilde u_j)^2}
\sqrt{D(W^{\overline U},0 )}.
\end{eqnarray*}
which is integrable, which yields  (\ref{cv}) by dominated convergence.

\subsection{Proof of Theorem \ref{meas} ii)}
The same change of variables as in  (\ref{Psitilde2}), 
applied to $T_i=\log \lambda_i$, implies that, for any $j_0\in V$ and $(\lambda_i)_{i\in V}$ positive reals, 
$$
\frac{\prod_{i \in V} \lambda_i}{\sqrt{2\pi}^{N-1}}e^{u_{j_0}-\log(\lambda_{j_0})}e^{-\demi\sum_{\{i,j\}\in E} W_{i,j} \lambda_i\lambda_j\left(e^{\demi(u_j-u_i)}\sqrt{{\lambda_i\over\lambda_j}}
-e^{\demi(u_j-u_i)}\sqrt{{\lambda_j\over\lambda_i}}\right)^2} \sqrt{D(W,u)}
$$
is the density of a probability measure, which we call $\nu^{\lambda,j_0}$ (using that \eqref{density} defines a probability measure). 
Remark that this density  can be rewritten as
$$
{\prod_{i\in V} \lambda_i \over \sqrt{2\pi}^{N-1}}e^{u_{j_0}-\log(\lambda_{j_0})}e^{-\demi\sum_i\sum_{j\sim i} W_{i,j}(\lambda_i^2e^{u_j-u_i}- \lambda_i\lambda_j) } \sqrt{D(W,u)}
$$
Let  $(U_i)$ be a random variable distributed according to (\ref{density}), and, conditionally on $U$, let $(Z_t)$ be the Markov jump process 
starting  at $i_0$, and with jump rates  from $i$ to $j$
 $$
 \demi W_{i,j}e^{U_j-U_i}.
 $$
 Let $(\fff^Z_t)_{t\ge 0}$ be the filtration generated by $Z$, and let $E^U_i$ be the law of
 the process $Z$ starting at $i$, conditionally on $U$. 
 
 We denote by
 $(l_i(t))_{i\in V}$ the vector of local times of the process $Z$ at time $t$, and consider the positive continuous
additive functional of $Z$
$$
B(t)=\int_0^t \demi{1\over \sqrt{ 1+l_{Z_u}(u)}}du=\sum_{i\in V} \left(\sqrt{ 1+l_i(t)}-1\right),
$$
and the time changed process
$$
\tilde Y_s =Z_{B^{-1}(s)}.
$$

 Let us first prove that the law of $U$ conditioned on $\fff^Z_t$ is
 \begin{eqnarray}\label{Ucond}
 \lll(U | \fff_t^Z) = \nu^{\lambda(t), Z_t},
 \end{eqnarray}
 where $\lambda_i(t)= \sqrt{1+ l_i(t)}$.
 Indeed, let $t>0$: if $\tau_1, \ldots, \tau_{K(t)}$ denote the jumping times of the Markov process $Z_t$ up to time 
 $t$, then for any positive test function,
 \begin{align*}
 &E^{U}_{i_0}\left( \psi(\tau_1, \ldots, \tau_{K(t)}, Z_{\tau_1}, \ldots, Z_{\tau_{K(t)}})\right)=
 \\
 &\sum_{k=0}^\infty \sum_{i_1, \ldots, i_k} (\prod_{l=0}^{k-1} \demi W_{i_l, i_{l+1}})
 \int_{[0,t]^k} \psi((t_j), (i_j))e^{U_{i_k}-U_{i_0}}e^{-\demi \sum_{l=0}^{k-1} \left(\sum_{j\sim i_l} W_{i_l,j} e^{U_j-U_{i_l}}\right) (t_{l+1}-t_l)}  dt_1\cdots dt_k
 \end{align*}
 with the convention $t_{k+1}=t$.
 Hence, for any test function $G$,
 \begin{eqnarray*}
\E \left( G(U)| \fff^Z_t\right)
 =
 {\int_{\hhh_0} G(u) e^{u_{Z_t}}e^{-H(W,u)-\demi\sum_{i\in V} \left(\sum_{j\sim i} W_{i,j} e^{u_j-u_i}\right)l_i(t)}\sqrt{D(W,u)} du\over
  \int_{\hhh_0} e^{u_{Z_t}}e^{-H(W,u)-\demi\sum_{i\in V} \left(\sum_{j\sim i} W_{i,j} e^{u_j-u_i}\right)l_i(t)}\sqrt{D(W,u)} du}.
 \end{eqnarray*}
 Using that we can write $H(W,u)= \demi \sum_{i\in V}\sum_{j\sim i} (e^{u_j-u_i}-1)$, and introducing adequate constants in the numerator
 and denominator we have 
 \begin{eqnarray*}
&&\E \left( G(U)| \fff^Z_t\right) 
\\
&=&
{\sqrt{2\pi}^{-(N-1)} \int_{\hhh_0} G(u) (\prod \lambda_i) e^{u_{Z_t}-\log\lambda_{Z_t}}e^{-\demi\sum_{i}\sum_{j\sim i} W_{i,j}(\lambda_i(t)^2e^{ u_j-u_i}- \lambda_i(t)\lambda_j(t))}\sqrt{D(W,u)} du\over
\sqrt{2\pi}^{-(N-1)}\int_{\hhh_0} (\prod \lambda_i) e^{u_{Z_t}-\log\lambda_{Z_t}}e^{-\demi\sum_{i}\sum_{j\sim i} W_{i,j}(\lambda_i(t)^2e^{ u_j-u_i}- \lambda_i(t)\lambda_j(t))}\sqrt{D(W,u)} du}
 \end{eqnarray*}
 (recall that
 $\lambda_i(t)=\sqrt{1+l_i(t)}$). The denominator is 1 since it is the integral of the density of $\nu^{\lambda(t), Z_t}$. This proves  (\ref{Ucond}).
 
 Subsequently, by (\ref{Ucond}), conditioned on $(\fff_t^Z)$, if the process $Z$ is at $i$ at time $t$, then it
 jumps to a neighbour $j$ of $i$ with rate
 $$
 \demi W_{i,j} \E^{\nu^{\lambda(t),i}}\left( e^{U_j-U_i}\right)
 =
 \demi W_{i,j} {\lambda_j(t)\over \lambda_i(t)}.
 $$
In order to conclude, we now compute the corresponding rate for  $\tilde Y$: by definition,
 $$
 B'(t)=\demi{1\over \sqrt{ 1+l_{Z_t}(t)}}.
 $$
 Therefore, similarly as in the proof of Lemma \ref{invtime},
 \begin{eqnarray*}
 \P\left( \tilde Y_{s+ds}=j | \fff_t^Z\right)&=&
 \P\left(Z_{B^{-1}(s+ds)}=j | \fff_t^Z\right)
\\
&=&
\demi W_{Y_s,j} {1\over B'(B^{-1}(s))} {\lambda_j(B^{-1}(s)) \over \lambda_{Y_s}(B^{-1}(s))}\,ds
\\
&=&
W_{Y_s,j} \lambda_j(B^{-1}(s))\,ds.
\end{eqnarray*} 
Let $(\tl_i(s))$ be the local time of the process $\tilde Y$. Then
\begin{eqnarray*}
(\tl_i(B(t))'=
B'(t) \indic_{\{\tilde Y_{B(s)=i}\}}=\demi (1+l_i(t))^{-\demi}\indic_{\{Z_t=i\}}.
\end{eqnarray*}
This implies
\begin{equation}
\label{ltl}
\tl_i(B(t))= \sqrt{1+l_i(t)} -1
\end{equation}
 and
$$ \P\left( \tilde Y_{s+ds}=j | \fff_t^Z\right)=
W_{\tilde Y_s,j} (1+\tl_j(s))\,ds$$
This means that the annealed law of $\tilde Y$ is the law of a VRJP with conductances $(W_{i,j})$ (this is the content of remark \ref{r3}).

Therefore, the process defined, for all $t\ge0$, by $\tilde Y_{A^{-1}(t)}=Z_{(A\circ B)^{-1}(t)}$, is equal in law to $(X_t)_{t\ge0}$; let us denote by $T$ its local time, and show that 
$T_i(t)-t/N$
converges to $U_i$ as $t\to\iy$, which will complete the proof.

First note, using \eqref{tlx} and \eqref{ltl}, that, for all $i\in V$, 
$$T_i((A\circ B)(t))=\log(\tl_i(B(t))+1)=\log(1+l_i(t))/2.$$

On the other hand, conditionally on $U$, the Markov Chain $Z$ has invariant measure $(Ce^{2U_i})_{i\in V}$, $C:=(\sum_{i\in V}e^{2U_i})^{-1}$, so that $l_i(t)/(Ce^{2U_i}t)$ converges to $1$ as $t\to\iy$, for all $i\in V$. 

Therefore, for all $i\in V$, 
$$T_i(t)-T_{i_0}(t)=\frac{1}{2}\log\left(\frac{1+l_i((A\circ B)^{-1}(t))}{1+l_i((A\circ B)^{-1}(t))}\right),$$
which converges towards $U_i-U_{i_0}$ as $t\to\iy$, which enables us to conclude.
\section{Back to Diaconis-Coppersmith formula}\label{BCP}

It follows from de Finetti's theorem for Markov chains \cite{diaconis-freedman,rolles1} that the law of the ERRW is a mixture of reversible Markov chains; its mixing measure was explicitly described by Coppersmith and Diaconis (\cite{coppersmith}, see also \cite{keane-rolles1}). 

Theorems 1 and 2 enable us to retrieve this so-called Coppersmith-Diaconis formula, including its de Finetti part: they imply that the ERRW $(X_n)_{n\in\N}$
follows the annealed law of a reversible Markov chain in a random conductance network $x_{i,j}=W_{i,j} e^{U_{i}+U_j}$ where
$W_e\sim Gamma(a_e,1)$, $e\in E$,  are independent random variables and, conditioned on $W$, the random variables $(U_i)$
are distributed according to the law (\ref{density}). 

Let us compute the law it induces on the random variables $(x_e)$.
The random
variable $(x_e)$ is only significant up to a scaling factor, hence
we consider a 0-homogeneous bounded measurable test function $\phi$; by Theorem 2,
\begin{eqnarray*}
&&\E\left( \phi((x_e))\right)
\\
&=&
{1\over \sqrt{ 2\pi}^{N-1}}\int_{\R_+^E\times \hhh_0} \phi(x) \left(\prod_{e\in E} {1\over \Gamma(a_e)}W_e^{a_e}e^{-W_e} \right) e^{u_{i_0}} \sqrt{D(W,u)}e^{-H(W,u)}{dW\over W}  du
\end{eqnarray*}
where we write ${dW\over W}=\prod_{e\in E}{dW_e\over W_e}$.
Changing to coordinates $\overline u_i=u_i-u_{i_0}$ yields
\begin{eqnarray*}
C(a)\int_{\R_+^E\times \R^{V\setminus\{i_0\}}} \phi(x) \left(\prod_{e\in E} W_e^{a_e}e^{-W_e} \right) 
e^{-\sum_{i\neq i_0} \overline u_i} \sqrt{D(W, \overline u)}e^{-H(W,\overline u)} {dW\over W} d\overline u
\end{eqnarray*}
with $d\overline u=\prod_{i\neq i_0} d\overline u_i$ and
 $C(a)= {1\over \sqrt{ 2\pi}^{N-1}}\left(\prod_{e\in E} {1\over \Gamma(a_e)}\right)$.
But
  $$
  -\sum_{e\in E} W_e -H(W,\overline u)= -\demi \sum_{\{i,j\}\in E} W_{i,j}e^{\overline u_i+\overline u_j}\left(e^{-2\overline u_j}+e^{-2\overline u_i}\right).
  $$
  The change of  variables 
  $$\left((x_{i,j}=W_{i,j}e^{\overline u_i+\overline u_j})_{\{i,j\}\in E}, (v_i=e^{-2\overline u_i})_{i\in V\setminus\{i_0\}}\right),
  $$
with $v_{i_0}=1$  implies
$$
  -\sum_{e\in E}W_e -H(W,\overline u)= -\demi \sum_{i\in V } v_i x_i,
$$
where $x_i=\sum_{j\sim i} x_{i,j}$, and $\E\left( \phi((x_e))\right)$ is equal to the integral
\begin{eqnarray*}
C'(a)\int \phi(x) \left(\prod_{e\in E} x_e^{a_e} \right) \left( \prod_{i\in V} v_i^{(a_i+1)/2}\right) v_{i_0}^{-\demi}  \sqrt{D(x)}
e^{-\demi \sum_{i\in V } v_i x_i} \left(\prod_{e\in E} {dx_e\over x_e}\right) \left( \prod_{i\neq i_0} {dv_i\over v_i}\right),
\end{eqnarray*}
with $a_i=\sum_{j\sim i} a_{i,j}$, $D(x)$  determinant of any diagonal minor of the $N\times N$ matrix
$$
m_{i,j}=\left\{ \begin{array}{ll} x_{i,j}  & \hbox{ if $i\neq j$}
\\
-\sum_{k\sim i} x_{i,k} &\hbox{ if $i=j$}
\end{array}\right.
$$
 and
$$
C'(a)= {2^{-N+1}\over \sqrt{ 2\pi}^{N-1}}\left(\prod_{e\in E} {1\over \Gamma(a_e)}\right).
$$
Let $e_0$ be a fixed edge: we normalize the conductance to be 1 at $e_0$ by changing to variables  
$$\left(\left(y_e={x_e\over x_{e_0}}\right)_{e\neq e_0}, \left( z_i=x_{e_0} {v_i}\right)_{i\in V}\right),
$$ 
with $y_{e_0}=1$. Now, observe that
$$
\left(\prod_{e\in E} {dx_e\over x_e}\right) \left( \prod_{i\neq i_0} {dv_i\over v_i}\right)= \left(\prod_{e\in E, e\neq e_0} {dy_e\over y_e}\right) 
\left( \prod_{i \in V} {dz_i\over z_i}\right).
$$
We deduce that $\E\left( \phi((x_e))\right)$ equals the integral
  \begin{eqnarray*}
C(a)\int_{\R_+^V\times \R_+^{E\setminus\{e_0\}}} \phi(y) \left(\prod_{e\in E} y_e^{a_e} \right) \left( \prod_{i\in V} z_i^{a_i/2}\right) z_{i_0}^{-\demi}  \sqrt{D(y)}
e^{-\demi \sum_{i\in V } z_i y_i} \left({dy\over y}\right)\left({dz\over z}\right),
\end{eqnarray*}
with ${dy\over y}= \prod_{e\neq e_0} {dy_e\over y_e}$ and ${dz\over z}=\prod_{i\in V} {dz_i\over z_i}$.
Therefore, integrating over the variables $z_i$
\begin{eqnarray*}
\E\left( \phi((x_e))\right)
=
C''(a)\int_{\R_+^{E\setminus\{e_0\}}} \phi(y) y_{i_0}^{\demi}  {\left(\prod_{e\in E} y_e^{a_e} \over \prod_{i\in V} y_i^{(a_i+1)/2}\right)}  \sqrt{D(y)}
\left({dy\over y}\right),
\end{eqnarray*}
where
$$
C''(a)={2^{1-N-\sum_{e\in E} a_e}\over \pi^{(N-1)/2}}{\Gamma(a_{i_0}/2) \prod_{i\neq i_0} \Gamma((a_i+1)/2)\over \prod_{e\in E} \Gamma(a_e)}
$$
which is Diaconis-Coppersmith formula: the extra term $(\vert E\vert -1)!$  in
\cite{keane-rolles1,diaconis-rolles}  arises from the normalization of  $(x_e)_{e\in E}$ on the simplex
$\Delta=\{\sum x_e=1\}$ (see Section 2.2 \cite{diaconis-rolles}).

\section{The supersymmetric hyperbolic sigma model}

We first relate  VRJP to the supersymmetric hyperbolic sigma model studied in Disertori, Spencer and Zirnbauer \cite{dsz,ds}. 
For notational purposes, we restrict our attention to the $d$-dimensional lattice, that is, our graph is $\Z^d$ with $x\sim y$ if $\vert x-y\vert_1 =1$.
We denote by $E$ the set of edges $E=\{\{i,j\}, \; i\sim j\}$. For a subset $\Lambda \subset \Z^d$ we denote by $E_{\Lambda}$ the set
of edges with both extremities in $\Lambda$.

We start by a description of the measures defined in \cite{dsz,ds}. 
Let $V\subset \Z^d$ be a connected finite subset containing $0$.  Let $\beta_{i,j}$, $i,j\in V$, $i\sim j$ be some positive weights on the edges, and 
$\e=(\epsilon_i)_{i \in V}$ be a vector of non-negative reals, $\epsilon \neq 0$.
Let  $\mu_V^{\e,\be}$ be a generalization of the measure studied in \cite{ds} (see (1.1)-(1.7)  in that paper), namely
\begin{align*}
d\mu_V^{\e,\be}(t):&=\left(\prod_{j\in V}\frac{dt_j}{\sqrt{2\pi}}\right)e^{-\sum_{k\in V}t_k}e^{-F_V^\beta(\nabla t)}e^{-M_V^{\e}(t)}\sqrt{\mathsf{det}\,\,A_V^{\e,\beta}}\\
&=\left(\prod_{j\in V}\frac{dt_j} {\sqrt{2\pi}}\right)e^{-F_V^\beta(\nabla t)}e^{-M_V^\e(t)}\sqrt{\mathsf{det}\,\,D_V^{\e,\beta}}
\end{align*}
where $A_V^{\e,\beta}=A^{\e,\beta}$ and $D_V^{\e,\beta}=D^{\e,\beta}$ are defined by, for all $i$, $j$ $\in V$, by
$$
A_{ij}^{\e,\beta}=e^{t_i}D_{ij}^{\e,\beta} e^{t_j}=
\begin{cases}
0&|i-j|>1\\
-\be_{ij}e^{t_i+t_j}&|i-j|=1\\
\sum_{l\sim i, l\in V}\be_{il}e^{t_i+t_l}+\e_i e^{t_i}&i=j\\
\end{cases}
$$
\begin{align*}
F_V^\beta(\nabla t)&:=\sum_{\{i,j\}\in E_V}\be_{ij}(\cosh(t_i-t_j)-1)\\
M_V^\e(t)&:=\sum_{i\in V}\e_i(\cosh t_i-1).
\end{align*}
The fact that $\mu_V^{\e,\be}$ is a probability measure can be seen as a consequence of supersymmetry (see (5.1) in \cite{dsz}). This is also a consequence of Theorem \ref{meas} i), cf later.

The measure $\mu_V^{\e,\be}$ is directly related to the measure (\ref{density}) defined in theorem \ref{meas} as follows.
Let add an extra point $\delta$ to $V$, $\tilde V=V\cup\{\delta\}$, and extra edges $\{i,\de\}$ connecting any site $i\in V$ to $\de$, i.e. $\tilde E_V=E_V\cup\cup_{i\in V}\{i,\de\}$. Consider the VRJP on $\tilde V$ starting at $\delta$ and
with conductances $W_{i,j}=\beta_{i,j}$, if $i\sim j$ in $V$, and $W_{i,\delta}=\epsilon_i$.
Let us again use notation $(U_i)_{i\in\tilde{V}}$ for the limiting centred occupation times of VRJP on
$\tilde V$ starting at $\delta$, and consider the change of variables, from $\hhh_0$ into $\R^V$, which maps $u_i$ to $t_i:=u_i-u_\de$. Then, by Theorem \ref{meas},  for any test function $\phi$, letting $\iota$ be the canonical injection $\R^V\longrightarrow\R^{\tilde{V}}$,
\begin{align*}
\Es^{W}_\de(\phi(U-U_\delta ))&=\frac{1}{(2\pi)^{\vert V\vert/2}}\int_{\hhh_0}\phi(u-u_\delta)e^{u_\de}e^{-H(W,u)} \sqrt{D(W,u)}\,du\\
&=\frac{1}{(2\pi)^{\vert V\vert/2}}\int_{\R^V}\phi(t)e^{-\sum_{i\in V}t_i}e^{-H(W,\iota(t))} \sqrt{D(W,\iota(t))}\left(\prod_{i\not=\de}\,dt_i\right)\\
&=\mu_V^{\e,\be}\left(\phi(t)\right),
\end{align*}
which means that $U-U_\delta$ is distributed according to $\mu_V^{\e,\be}$.
Indeed $A_V^\e$ is the restriction to $V\times V$ of the matrix $M(W,\iota(t))$ (which is on $\tilde{V}\times \tilde{V}$) (so that $\mathsf{det}\,\,A_V^\e=D(W,\iota(t))$), and 
$F_V(\nabla t)+M_V^\e(t)=H(W,\iota(t))$. 

We will be interested in the VRJP on finite subsets of $\Z^d$ starting at 0. In order to apply directly results of \cite{ds} we consider
the VRJP on $\Z^d$ with an extra point $\delta$ uniquely connected to 0 and with $W_{0,\delta}=\epsilon_0=1$,
$W_{i,j}=\beta_{i,j}$, $i\sim j$ in $\Z^d$.
Clearly, the trace on $\Z^d$ of the VRJP starting from $\delta$ has the law of the VRJP on $\Z^d$ starting from 0.
When $V$ contains $0$ 
%and when $(\e_i)=\epsilon\delta_{0}$ for some real $\epsilon>0$ (where $\delta_0$
%is the Dirac at 0) we write simply $d\overline \mu_V^{\e,\be}$
%for $d\mu_V^{\e \delta_0,\be}$.  
the limiting occupation time $U_i -U_\delta$ of the VRJP on $\tilde V=V\cup\{\delta\}$ starting
at $\delta$ is then distributed according to $d\mu_V^{\delta_0,\be}$, where $\delta_0$ is the Dirac at 0.

%Let us add a vertex $\de$ connected to $0$, that is, consider the graph with vertices $\Z^d\cup\{\delta\}$ with an extra edge $\{\de,0\}$. 

%Assume that we are given positive conductances on the network : in order to be closer to the notation
%of \cite{dsz,ds}, we denote by
%$\beta_{x,y}=W_{x,y}$ the conductances on the edge $\{x,y\}$, if $x,y \in \Z^d$ and $\e=W_{0,\delta}$ the conductance on the edge $\{0,\delta\}$.
%Note that VRJP on $\Z^d$ and on $\Z^d\cup \{\delta\}$ are easy to compare.

Set, for all $\be>0$,
$$I_\be:=\sqrt{\be}\int_{-\iy}^\iy\frac{dt}{\sqrt{2\pi}}e^{-\be(\cosh t-1)},
$$
which is strictly increasing in $\be$.
Let $\beta^d_c$ be defined as the unique solution to the equation
$$
I_{\beta^d_c}e^{\beta^d_c(2d-2)} (2d-1)=1
$$
for all $d>1$, $\beta^d_c:=\iy$ if $d=1$.

If the parameters $\beta_e$ are constant equal to $\beta$ over all edges $e$, then Theorem 2 in \cite{ds} readily implies that VRJP over any graph $\Z^d$ is recurrent for $\beta<\beta^d_c$ (i.e. for large reinforcement).
\begin{theorem}[Disertori and Spencer \cite{ds}, Theorem 2]\label{exp}
Suppose that $\beta_e=\beta$ for all $e$. Then there exists a constant $C_0:=2d/(2d-1)>0$ such that, for all $\Lambda\subset \Z^d$ finite connected containing 0, $x\in\La$,  $0<\beta <\beta^d_c$, 
$$
\mu_\Lambda^{\eta\delta_0,\beta}\left( e^{t_x/2} \right)\le C_0 I_{\eta}\left[ I_\beta e^{\beta(2d-2)}(2d-1)\right]^{\vert x\vert}.
$$
\end{theorem}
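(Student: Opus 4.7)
This statement is Theorem 2 of \cite{ds}, so the plan is to outline, rather than reproduce, the Disertori--Spencer argument. The foundation is the Ward identity $\mu_\Lambda^{\e,\beta}(1) = 1$; in our setting this is a direct consequence of Theorem \ref{meas} i), after identifying $\mu_\Lambda^{\e,\beta}$ with the law of $(U_i - U_\delta)_{i\in V}$ for the VRJP on $V\cup\{\delta\}$ rooted at $\delta$. Granted this, the strategy is to extract exponential decay by telescoping along a path and iterating a one-step ``transfer'' estimate.

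Fix a nearest-neighbour geodesic $0=x_0, x_1, \ldots, x_n=x$ with $n=\vert x\vert$ and write
$$
e^{t_x/2} \;=\; e^{t_0/2}\prod_{k=1}^{n} e^{(t_{x_k}-t_{x_{k-1}})/2}.
$$
For each edge factor I would establish a uniform one-step inequality of the form
$$
\mu_\Lambda^{\e,\beta}\!\left(e^{(t_i-t_j)/2}\, g\right) \;\le\; I_\beta\, e^{\beta(2d-2)}(2d-1)\;\mu_{\Lambda\setminus\{i\}}^{\e',\beta}(g),
$$
where $i\sim j$, $i$ is the vertex being integrated out, $g$ depends only on $t_{\Lambda\setminus\{i\}}$, and $\e'$ is the updated boundary pinning produced by this integration. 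The three factors arise as follows: (a) the explicit one-dimensional integral $\int e^{(t_i-t_j)/2}e^{-\beta(\cosh(t_i-t_j)-1)}\,dt_i/\sqrt{2\pi}$ yields $I_\beta$; (b) bounding the remaining $2d-2$ nonnegative interactions $\beta(\cosh(t_i-t_\ell)-1)\ge 0$ from below by $0$, after first extracting a controlled contribution, produces $e^{\beta(2d-2)}$; and (c) a Schur-complement computation for $\sqrt{\det A_\Lambda^{\e,\beta}}$, read via the matrix-tree theorem, accounts for the change in the determinantal weight upon removal of $i$ and contributes the combinatorial $(2d-1)$ (number of spanning-tree extensions through $i$). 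Iterating this bound $n$ times yields the factor $[I_\beta e^{\beta(2d-2)}(2d-1)]^{\vert x\vert}$, while the remaining $e^{t_0/2}$, combined with the pinning $e^{-\eta(\cosh t_0-1)}$ at the origin and the Ward identity applied to the final reduced measure, produces $C_0 I_\eta$ with $C_0 = 2d/(2d-1)$.

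The main obstacle is the precise bookkeeping of the determinantal factor $\sqrt{\det A_\Lambda^{\e,\beta}}$ under successive vertex removal: it is not a product of local terms, and naive estimates would blow up. This is exactly where the supersymmetric cancellation underlying the Ward identity $\mu_\Lambda^{\e,\beta}(1)=1$ becomes indispensable, ensuring that each reduction step remains tight. The hypothesis $\beta<\beta_c^d$ then guarantees $I_\beta e^{\beta(2d-2)}(2d-1)<1$, so that the iterated bound indeed produces genuine exponential decay along the geodesic path.
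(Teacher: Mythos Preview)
Your proposed route is \emph{not} the Disertori--Spencer argument, and the paper actually reproduces that argument in full in the proof of Proposition~\ref{expvrjp} (it writes: ``For convenience we provide a self-contained proof but the only new input compared to \cite{ds}, Theorem~2, lies in the threshold argument''). So the comparison is straightforward.

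The actual mechanism is a \emph{global} path expansion of the determinant, not a single-geodesic peeling. Via the matrix-tree theorem one writes
\[
\det D_\Lambda^{\eta\delta_0,\beta}
=\eta\,e^{-t_x}\sum_{\gamma\in\Gamma_x}\Bigl(\prod_{e\in\gamma}\beta_e\Bigr)\det D_{\Lambda_\gamma^c}^{\epsilon,\beta},
\]
the sum running over \emph{all} self-avoiding paths $\gamma$ from $0$ to $x$. After $\sqrt{\sum}\le\sum\sqrt{\,}$, each summand splits as a path integral times an off-path factor $Z_{\Lambda_\gamma^c}^{\gamma,\beta}(t_\gamma)$; the Ward identity is applied to the off-path measure (Lemma~\ref{estz}) to get $Z_{\Lambda_\gamma^c}^{\gamma,\beta}(t_\gamma)\le e^{\sum_{e\sim\gamma}\beta_e}$, which for constant $\beta$ produces the $e^{\beta(2d-2)}$ per step. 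The remaining one-dimensional path integrals give $\prod_{e\in\gamma}I_\beta$, and the factor $(2d-1)^{|x|}$ (together with $C_0=2d/(2d-1)$) comes from \emph{counting} the paths: $|\Gamma_x|\le 2d(2d-1)^{|x|-1}$.

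Your scheme has two concrete problems. First, the combinatorial factor $(2d-1)$ is a path-counting artifact of the sum over $\Gamma_x$; it does not fall out of a Schur complement at a single vertex as you suggest. Second, and more seriously, your one-step transfer inequality
\[
\mu_\Lambda^{\e,\beta}\!\bigl(e^{(t_i-t_j)/2}g\bigr)\le I_\beta\,e^{\beta(2d-2)}(2d-1)\,\mu_{\Lambda\setminus\{i\}}^{\e',\beta}(g)
\]
is not available: integrating out $t_i$ against $e^{-\sum_{\ell\sim i}\beta(\cosh(t_i-t_\ell)-1)}\sqrt{\det D_\Lambda^{\e,\beta}}$ does \emph{not} return a measure of the same form $\mu_{\Lambda\setminus\{i\}}^{\e',\beta}$ with merely updated pinning; the $\cosh$ interaction is not Gaussian, so eliminating $t_i$ creates genuine multi-body couplings among the neighbours of $i$ that are outside the model class. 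The Ward identity does not help here, because it controls the total mass, not the shape of a one-point marginal. The Disertori--Spencer trick is precisely to avoid ever integrating a single bulk vertex: one freezes the entire path $\gamma$ at once, and uses the Ward identity on the \emph{complement} $\Lambda_\gamma^c$ (where the frozen path acts only as pinning), which is why the reduced measure stays in the family.
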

\begin{corollary}\label{rec}
For $0<\beta <\beta^d_c$, let $(Y_n)$ be the discrete time process associated with 
the VRJP on $\Z^d$ starting from 0 with constant conductance $\beta$. Then $(Y_n)$ is a mixture of reversible positive recurrent 
Markov chains.
\end{corollary}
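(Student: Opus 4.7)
The plan is to combine the finite-volume mixture representation of Theorem \ref{meas} ii) with the Disertori--Spencer exponential decay of Theorem \ref{exp}. First I would use the $\delta$-trick described just before the statement: for any finite connected $\Lambda \ni 0$, Theorem \ref{meas} ii) represents the VRJP on $\Lambda \cup \{\delta\}$ started at $\delta$ (with $W_{0,\delta}=1$ and internal conductances $\beta$) as a mixture of reversible Markov chains in the random environment $(U_i - U_\delta)_{i \in \Lambda} \sim \mu_\Lambda^{\delta_0,\beta}$, with conductances $c_{ij} = \beta e^{U_i + U_j}$ on the internal edges of $\Lambda$.

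Next I would promote this to infinite volume. Applying Theorem \ref{exp} with $\eta = 1$ gives the $\Lambda$-uniform bound $\mu_\Lambda^{\delta_0,\beta}(e^{t_x/2}) \le C_0 I_1 r^{|x|}$ with $r := I_\beta e^{\beta(2d-2)}(2d-1) < 1$, thanks to the hypothesis $\beta < \beta_c^d$. Along an exhaustion $\Lambda_n \uparrow \Z^d$, this yields tightness on $\R^{\Z^d}$ and a subsequential weak limit $\mu_\infty$ enjoying the same exponential decay $\E^{\mu_\infty}[e^{U_x/2}] \le C_0 I_1 r^{|x|}$. A standard truncation/coupling argument for the VRJP should then show that the discrete-time process $(Y_n)$ on $\Z^d$ is, conditionally on a field $U \sim \mu_\infty$, the reversible Markov chain with conductances $c_{xy} = \beta e^{U_x + U_y}$.

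Finally, to conclude positive recurrence for $\mu_\infty$-a.e.\ environment, I would note that by Tonelli
$$\E^{\mu_\infty}\Bigl[\sum_{x\in\Z^d} e^{U_x/2}\Bigr] \le C_0 I_1 \sum_{x\in\Z^d} r^{|x|} < \infty,$$
so $\sum_x e^{U_x/2} < \infty$ almost surely. In particular $U_x \to -\infty$, so $e^{U_x/2} \le 1$ for large $|x|$, and then $e^{2U_x} = (e^{U_x/2})^4 \le e^{U_x/2}$, giving $\sum_x e^{2U_x} < \infty$. An AM--GM bound $e^{U_x + U_y} \le \demi(e^{2U_x} + e^{2U_y})$ then yields $\sum_{\{x,y\} \in E} \beta e^{U_x + U_y} < \infty$, i.e.\ a finite total conductance. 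This is precisely the criterion for the reversible conductance chain to admit a finite invariant measure $\pi_x = \sum_{y \sim x} c_{xy}$, hence to be positive recurrent.

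The main obstacle I anticipate is the infinite-volume promotion in the second paragraph---not the decay estimate itself but the careful identification of the annealed VRJP law on $\Z^d$ with the annealed conductance chain under $\mu_\infty$. This requires tightness together with consistency of $(\mu_{\Lambda_n}^{\delta_0,\beta})$ along the exhaustion, plus a localization argument showing that restrictions of VRJP on $\Z^d$ to finite boxes match, in the appropriate annealed sense, the VRJP on those boxes augmented by the $\delta$-vertex. Once this step is in place, the rest is a direct deterministic consequence of the Disertori--Spencer estimate.
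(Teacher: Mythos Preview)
Your strategy coincides with the paper's at the level of inputs (Theorem \ref{meas} ii) plus Theorem \ref{exp}), but the execution differs. The paper does not take a weak limit $\mu_\infty$ or run your Tonelli/AM--GM argument on $\sum_x e^{t_x/2}$. Instead it bounds the ratio $(c_{x,y}/c_{\delta,0})^{1/4}$ directly: Cauchy--Schwarz splits this into factors $e^{t_x/2}$ and $e^{(t_y-t_0)/2}$, and the stray $e^{-t_0/2}$ is absorbed into the pinning via $|z|\le 4\log C + \cosh z - 1$, converting $\mu_\Lambda^{\delta_0,\beta}$ into $2\mu_\Lambda^{\delta_0/2,\beta}$ so that Theorem \ref{exp} applies once more. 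Markov's inequality then yields the uniform tail bounds $\mu_\Lambda^{\delta_0,\beta}(c_{x,y}/c_{\delta,0} > e^{-c_1|x|}) \le e^{-c_2|x|}$, after which the infinite-volume step is outsourced to Lemma 5.1 of Merkl--Rolles \cite{merkl-rolles4} rather than argued via weak convergence of the mixing measures. Your deterministic summability argument is a clean alternative that sidesteps the pinning-absorption trick. One clarification on your final paragraph: the $\delta$-vertex is not a boundary-condition device. As the paper remarks, the trace on $\Lambda$ of the VRJP on $\Lambda\cup\{\delta\}$ started at $\delta$ \emph{is} the VRJP on $\Lambda$ started at $0$; the consistency you actually need is between free-boundary VRJPs on nested boxes, which is immediate up to the first exit time.
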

\begin{proof}
We prove this for the VRJP on $\Z^d$ with an extra point $\delta$ connected to 0 only, and conductances
$W_{x,y}=\beta$ and $W_{0,\delta}=1$, which is clearly stronger. 
On a finite connected subset $V\subset \Z^d$ containing 0, 
we know from Theorem \ref{meas} that $(Y_n)_{n\in\N}$, the discrete-time process associated with 
$(Y_s)_{s\ge 0}$, is a mixture of reversible Markov chains with conductances $c_{x,y}=\beta e^{t_x+t_y}$, where $(t_x)_{x\in V}$
has law $\mu_V^{\delta_0,\beta}$. 

Now Theorem \ref{exp} implies that $\mu_V^{\delta_0,\beta}( (c_{e}/c_{\delta,0})^{1/4})$ decreases exponentially with the distance from $e$ to 0: indeed, by Cauchy-Schwarz inequality, 
\begin{eqnarray*}
\mu_V^{\delta_0,\beta}( (c_{x,y}/c_{\delta,0})^{1/4})&\le& \left[\mu_V^{\delta_0,\beta}(e^{t_x/2})\mu_V^{\delta_0,\beta}(e^{(t_y-t_0)/2})\right]^{1/2}
\\
&\le&
C\left[\mu_V^{\delta_0,\beta}(e^{t_x/2}) \mu_V^{\delta_0,\beta}(e^{\demi (\cosh(t_0)-1)}e^{t_y/2})\right]^{1/2}
\\
&\le& 
2 C\left[\mu_V^{\delta_0,\beta}(e^{t_x/2})\mu_V^{\delta_0/2,\beta}(e^{t_y/2})\right]^{1/2}
\end{eqnarray*}
for some $C>0$ such that  $\vert z \vert\le 4\log C + \cosh (z)-1$.
This implies that there exists constants $c_1>0$, $c_2>0$, such that $\mu_V^{\delta_0,\beta}((c_{x,y}/c_{\delta,0})>e^{-c_1 \vert x\vert})
\le e^{-c_2 \vert x\vert}$. Following the proof of lemma 5.1 of \cite{merkl-rolles4} it implies that $(Y_n)$ is a mixture of positive recurrent Markov chains.
\end{proof}

By Theorems \ref{annealed} and \ref{meas}, the ERRW with constant initial weights $a>0$ corresponds to the case where
$(\beta_{e})_{e\in E}$ are independent random variables
with $\hbox{Gamma} (a,1)$  distribution for some parameter $a>0$: we can infer a similar localization and recurrence result for $a$ small enough.
This requires an extension of Theorem \ref{exp}
 for random gamma weights $(\beta_e)_{e\in E}$: we propose one in the following Proposition \ref{expvrjp}, in the same line of proof as in \cite{ds}.

%\begin{proposition}
%\label{expvrjp}
%Let $\G_x$ be the set of non-intersecting paths from $0$ to a vertex $x$ in $V$. For all $x\in V$, 
%$$\mu_V^{\e,\be}\left(e^{t_x/2}\right)\le\sqrt{\e}\sum_{\g\in\G_x}e^{{\sum_{\{i,j\}\in E, i\in\La_\g, j\not\in\La_\g}\be_{ij}}}I_{\e}\prod_{e\in\g} I_{\be_e}.$$
%where $\La_\g$ and $\La_\g^c$ are respectively the set of vertices in the path and its complement.
%\end{proposition}
\begin{proposition}
\label{expvrjp}
Let $a$ be a positive real, and assume that the conductances $(\beta_{e})_{e\in E}$ are i.i.d. with law
Gamma$(a,1)$. Denote by $\E$ the expectation with respect to the random variables $(\beta_e)_{e\in E}$.
For all $d\ge 1$, there exists $a_c^d>0$, $\de\in(0,1)$, such that if $a<a_c^d$, there exists 
$C_0>0$ depending only on $a$ and $d$ such that for all $x\sim y$, 
$$
\E\left(\mu_\La^{\eta \delta_0,\be}(e^{t_x/2})\right)\le C_0 I_\eta\delta^{\vert x\vert}, \;\;\; 
\E\left((\beta_{x,y})^{{1\over 4}} \mu_\La^{\eta \delta_0,\be}(e^{t_x/2})\right)\le C_0I_\eta \delta^{\vert x\vert}
$$
independently on the finite connected subset $\La$ containing 0 and $x$, $y$.
\end{proposition}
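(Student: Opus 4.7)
The plan is to adapt the proof of Disertori--Spencer's Theorem \ref{exp} to the random conductance setting, taking advantage of the fact that for small $a$, the Gamma$(a,1)$ law concentrates near $0$, where the per-edge factor $I_\beta$ becomes small.

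First, I would revisit the inductive step in the proof of \cite{ds} which, for deterministic $(\beta_e)_{e\in E}$, gives a bound of the form
\[
\mu_\La^{\eta\delta_0,\be}\bigl(e^{t_x/2}\bigr) \le I_\eta \sum_{\gamma: 0\to x} \prod_{e\in\gamma} K(\beta_e),
\]
the sum running over nearest-neighbor paths $\gamma$ of length $|x|$ from $0$ to $x$, and $K$ being the per-edge factor produced by iterating the one-step estimate. A direct reading of \cite{ds} yields $K(\beta)=I_\beta e^{\beta(2d-2)}$, which is \emph{not} integrable against Gamma$(a,1)$ when $d\ge 2$, since $\E_a[e^{\beta(2d-2)}]=(3-2d)^{-a}=\infty$. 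The main obstacle is therefore to reorganise the argument so that the exponential-in-$\beta_e$ contribution is absorbed by Gaussian-type factors inside the measure $\mu_\La$ itself (via integration by parts or a change of variables that shifts $t_i$ along a single edge), producing instead a per-edge factor $K(\beta)$ which is of at most polynomial type in $\beta$ for large $\beta$ and of order $\sqrt{\beta}\log(1/\beta)$ near $0$.

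Second, once such a $K$ is produced, the i.i.d.\ property of the $(\beta_e)$ makes the expectation factorise:
\[
\E\!\left[\prod_{e\in\gamma}K(\beta_e)\right] = \bigl(\E_a K(\beta)\bigr)^{|\gamma|},
\]
and summing over the at most $(2d-1)^{|x|}$ minimal paths from $0$ to $x$ gives
\[
\E\!\left[\mu_\La^{\eta\delta_0,\be}(e^{t_x/2})\right] \le C_0\, I_\eta\, \bigl[(2d-1)\,\E_a K(\beta)\bigr]^{|x|}.
\]
Since $I_\beta\to 0$ as $\beta\to 0$ and the Gamma$(a,1)$ density $\beta^{a-1}e^{-\beta}/\Gamma(a)$ concentrates on an arbitrarily small neighborhood of $0$ as $a\to 0$, dominated convergence gives $\E_a K(\beta)\to 0$ as $a\to 0$, so one may choose $a_c^d>0$ such that $\delta:=(2d-1)\,\E_a K(\beta)<1$ for all $a<a_c^d$.

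Finally, the second estimate follows by the same path decomposition with the extra weight $\beta_{x,y}^{1/4}$ only on the single edge $\{x,y\}$; by independence, this edge contributes $\E_a[\beta^{1/4}K(\beta)]$ in place of $\E_a K(\beta)$. Since $\beta^{1/4}$ is bounded on any neighborhood of $0$ and the tail integrability at infinity is unchanged, $\E_a[\beta^{1/4}K(\beta)]\to 0$ as $a\to 0$ as well, so the same exponential decay $\delta^{|x|}$ is preserved, up to adjusting $C_0$. The hard step is the first one, namely carrying out the Disertori--Spencer reduction in such a way that the per-edge factor has at most polynomial growth in $\beta_e$; this is the only point at which the random-conductance case genuinely differs from \cite{ds}.
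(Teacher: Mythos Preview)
You have correctly identified the obstacle---the Disertori--Spencer factor $e^{\beta(2d-2)}$ is not integrable against Gamma$(a,1)$---and your overall strategy (replace it by something with polynomial tails, then factorise over i.i.d.\ edges) is right. But your concrete mechanism (``integration by parts or a change of variables that shifts $t_i$'') is only a placeholder, and the paper's actual device rests on a distinction that your write-up blurs: in the path expansion the problematic exponential does \emph{not} sit on the edges $e\in\gamma$ of the path (those contribute only $\prod_{e\in\gamma} I_{\beta_e}$, which is harmless) but on the edges $e\sim\gamma$ \emph{adjacent} to the path, through the bound $Z_{\Lambda_\gamma^c}^{\gamma,\beta}(t_\gamma)\le e^{\sum_{e\sim\gamma}\beta_e}$. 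Packaging everything as a single per-path-edge factor $K(\beta_e)=I_{\beta_e}e^{\beta_e(2d-2)}$ hides this, and with it the fix.

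The paper's fix is a truncation applied to the adjacent edges only: set $\tilde\beta_e=\min(\beta_e,1)$ for $e\sim\gamma$, leaving the other edges unchanged. Adjacent conductances enter the determinant $D_{\Lambda_\gamma^c}^{\epsilon,\beta}$ linearly via the matrix-tree theorem, so this replacement costs only a multiplicative $\prod_{e\sim\gamma}\sqrt{\max(\beta_e,1)}$, while the exponential becomes bounded: $e^{\tilde\beta_e}\le e$. One arrives at
\[
\mu_{\Lambda}^{\eta\delta_0,\beta}\bigl(e^{t_x/2}\bigr)\ \le\ I_\eta \sum_{\gamma\in\Gamma_x}\Bigl(\prod_{e\in\gamma} I_{\beta_e}\Bigr)\Bigl(\prod_{e\sim\gamma}\sqrt{\max(\beta_e,1)}\,e^{\min(\beta_e,1)}\Bigr),
\]
and after taking $\E$ the two groups contribute $\hat I_a:=\E(I_\beta)\to 0$ and $\hat J_a:=\E\bigl(\sqrt{\max(\beta,1)}\,e^{\min(\beta,1)}\bigr)\to 1$ as $a\to 0$, giving exponential decay once $\hat I_a(\hat J_a)^{2d-2}(2d-1)<1$. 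The second inequality then follows since the extra $\beta_{x,y}^{1/4}$ modifies only one factor in the product (the edge $\{x,y\}$ is either on or adjacent to $\gamma$) and $\beta^{1/4}$ is integrable alongside either of the two per-edge factors. So the missing idea is precisely this truncation; without it, no amount of shifting or integration by parts in $t$ removes the $e^{\beta_e}$ contribution from the adjacent edges.
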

\begin{corollary}\label{rec2}
The ERRW on $\Z^d$ starting at 0 with constant initial weight $a>0$ is a mixture of positive recurrent Markov chains for $a<a_c^d$ (where $a_c^d$ is defined in Proposition \ref{expvrjp}). 
%Let $\G_x$ be the set of non-intersecting paths from $0$ to a vertex $x$ in $V$. For all $x\in V$, 
%$$\mu_V^{\e,\be}\left(e^{t_x/2}\right)\le\sqrt{\e}\sum_{\g\in\G_x}e^{{\sum_{\{i,j\}\in E, i\in\La_\g, j\not\in\La_\g}\be_{ij}}}I_{\e}\prod_{e\in\g} I_{\be_e}.$$
%where $\La_\g$ and $\La_\g^c$ are respectively the set of vertices in the path and its complement.
\end{corollary}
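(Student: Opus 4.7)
The plan is to follow the template of Corollary \ref{rec}, replacing the uniform $\beta$ with the random Gamma weights coming from Theorem \ref{annealed} and replacing Theorem \ref{exp} with its annealed counterpart Proposition \ref{expvrjp}. First, by Theorem \ref{annealed}, the ERRW $(X_n)$ on $\Z^d$ with constant initial weight $a$ is, in annealed law, the discrete-time process associated to a VRJP with independent random conductances $\beta_e\sim \mathrm{Gamma}(a,1)$. As in the proof of Corollary \ref{rec}, I would augment $\Z^d$ by an extra vertex $\delta$ connected only to $0$ with weight $W_{0,\delta}=\epsilon_0=1$, and consider the VRJP on $\Z^d\cup\{\delta\}$ started at $\delta$, since its trace on $\Z^d$ has the law of the VRJP started at $0$. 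The passage from recurrence on each finite box $V\ni 0$ containing $\delta$ to recurrence on $\Z^d$ is handled by a uniform estimate over $V$ in the spirit of Lemma 5.1 of \cite{merkl-rolles4}.

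By Theorem \ref{meas}, conditionally on $\beta$ and on the limiting variables $(t_x)_{x\in V}$ (whose law is $\mu_V^{\delta_0,\beta}$), the discrete process is a reversible Markov chain with conductances $c_{x,y}=\beta_{x,y}e^{t_x+t_y}$ (and $c_{\delta,0}=e^{t_0}$, since $t_\delta=0$). The key step is to show exponential decay of the annealed quantity $\Es\bigl[\mu_V^{\delta_0,\beta}\bigl((c_{x,y}/c_{\delta,0})^{1/4}\bigr)\bigr]$ uniformly in $V$, for every edge $\{x,y\}$. Writing $(c_{x,y}/c_{\delta,0})^{1/4}=\beta_{x,y}^{1/4}\,e^{(t_x+t_y-t_0)/4}$, I would apply Cauchy--Schwarz inside the $\mu$-integral to separate the $x$ and $y$ contributions, and then use the elementary inequality $|z|\le 4\log C+\cosh z -1$ (as in Corollary \ref{rec}) to absorb the factor $e^{-t_0/2}$ into a change of the parameter $\epsilon_0$ from $1$ to $1/2$; the resulting change of the $\sqrt{\det A^{\epsilon,\beta}}$ factor costs only a multiplicative constant, since $\det A^{\eta\delta_0,\beta}$ is affine in $\eta$. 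This yields
\begin{equation*}
\mu_V^{\delta_0,\beta}\bigl((c_{x,y}/c_{\delta,0})^{1/4}\bigr)
\le C\,\beta_{x,y}^{1/4}\,\bigl[\mu_V^{\delta_0,\beta}(e^{t_x/2})\bigr]^{1/2}\bigl[\mu_V^{\delta_0/2,\beta}(e^{t_y/2})\bigr]^{1/2}.
\end{equation*}

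Then, applying Cauchy--Schwarz in the disorder expectation in the symmetric form
$\Es[(\beta_{x,y}^{1/4}A)^{1/2}(\beta_{x,y}^{1/4}B)^{1/2}]\le \Es[\beta_{x,y}^{1/4}A]^{1/2}\Es[\beta_{x,y}^{1/4}B]^{1/2}$,
both resulting terms fall under the second bound of Proposition \ref{expvrjp} (with $\eta=1$ and $\eta=1/2$ respectively), provided $a<a_c^d$. This gives $\Es\bigl[\mu_V^{\delta_0,\beta}\bigl((c_{x,y}/c_{\delta,0})^{1/4}\bigr)\bigr]\le C\,\delta^{(|x|+|y|)/2}$, i.e.\ exponential decay in $|x|$ with a constant independent of $V$. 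A Markov inequality then provides constants $c_1,c_2>0$ such that $(\Pb\otimes \mu_V^{\delta_0,\beta})\bigl(c_{x,y}/c_{\delta,0}>e^{-c_1|x|}\bigr)\le e^{-c_2|x|}$.

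Finally, verbatim as at the end of the proof of Corollary \ref{rec}, an application of Lemma 5.1 of \cite{merkl-rolles4} converts this quantitative estimate into the statement that the annealed law of the discrete-time ERRW is a mixture of reversible positive recurrent Markov chains. The main technical obstacle is the double Cauchy--Schwarz step above: one has to split the exponent and the weight $\beta_{x,y}^{1/4}$ in the right way so that after taking the disorder expectation each factor fits exactly the form of the second inequality of Proposition \ref{expvrjp}, while keeping the shift $\epsilon_0\mapsto\epsilon_0/2$ harmless; everything else is a straightforward adaptation of the uniform $\beta$ case.
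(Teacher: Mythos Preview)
Your proposal is correct and follows essentially the same route as the paper's proof: the same reduction via Theorems~\ref{annealed} and~\ref{meas}, the same Cauchy--Schwarz inside $\mu$ to obtain $C\,\beta_{x,y}^{1/4}\bigl[\mu_V^{\delta_0,\beta}(e^{t_x/2})\bigr]^{1/2}\bigl[\mu_V^{\delta_0/2,\beta}(e^{t_y/2})\bigr]^{1/2}$ (with the $\epsilon_0\mapsto\epsilon_0/2$ absorption), the same second Cauchy--Schwarz in the disorder splitting $\beta_{x,y}^{1/4}$ symmetrically, and the same appeal to the second estimate of Proposition~\ref{expvrjp} followed by Markov's inequality and Lemma~5.1 of \cite{merkl-rolles4}. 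You have in fact spelled out a couple of steps (the affinity of $\det A^{\eta\delta_0,\beta}$ in $\eta$, the symmetric way to distribute $\beta_{x,y}^{1/4}$) that the paper leaves implicit.
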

\begin{remark}
Corollary \ref{rec2} also holds on any graph of bounded degree, and for possibly non-constant weights $(a_e)_{e\in E}$ with $a_e<a_c$ for some $a_c>0$. Indeed, the proof of Proposition \ref{expvrjp} also holds for independent (not necessarily i.i.d.) conductances $(\be_e)$ with $\Es(\sqrt{\be_e}(\log(1+\be_e^{-1}))$ sufficiently small, when the graph is of bounded degree.
\end{remark}
%We can then sum up the upper bound from this result over the random variables $(\beta_{e})_{e\in E}$, assuming they
%are random i.i.d. and $\E\left( e^{\beta_{x,y}}\right)<\infty$:  this implies recurrence of 
%VRJP in the i.i.d. random environment $\beta_e\sim\hbox{Gamma}(a,\mu)$ for any $\mu>1$ and $a$ small
%enough, but does not cover the case $\mu=1$ of the ERRW.
\begin{corollary}
\label{trans}
For any $d\ge3$, there exists $\be_c(d)$ such that, for all $\be>\be_c(d)$, VRJP on $\Z^d$ with constant conductance $\be$ is transient.
\end{corollary}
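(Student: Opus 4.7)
The plan is to combine Theorem~\ref{meas} ii) with the delocalization estimates of Disertori, Spencer and Zirnbauer \cite{dsz}, and deduce transience through a flow-energy argument in the spirit of Rayleigh--Thomson.

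By Theorem~\ref{meas} ii), the discrete-time process associated with VRJP on a finite box $\Lambda \subset \Z^d$ containing $0$ is a mixture of reversible Markov chains with random conductances $c_{xy} = \beta e^{U_x + U_y}$, where $(U_x)$ has law \eqref{density}. As in Section~6, after adding a ghost vertex $\delta$ connected only to $0$ with $W_{0,\delta}=1$ and setting $t_x := U_x - U_\delta$, the environment is distributed according to the supersymmetric hyperbolic sigma model $\mu_\Lambda^{\delta_0,\beta}$. Transience of a reversible chain is invariant under global rescaling of conductances; by Thomson's principle it suffices to produce a unit flow $\theta$ from $0$ to infinity on $\Z^d$ whose expected energy
\[
\E\Bigl[\sum_e \theta_e^2 \, r_e\Bigr] \;=\; \frac{1}{\beta}\sum_{e=\{x,y\}} \theta_e^2 \, \E_{\mu_\Lambda^{\delta_0,\beta}}\bigl[e^{-(t_x+t_y)}\bigr]
\]
stays bounded uniformly in $\Lambda$. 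Since $d \ge 3$, simple random walk on $\Z^d$ supplies a unit flow $\theta$ with $\sum_e \theta_e^2 < \infty$; using Cauchy--Schwarz and the inequality $e^{-2t} \le 2\cosh(2t)$, the expected energy is finite as soon as
\[
\sup_{\Lambda \ni 0,\, x \in \Lambda}\; \E_{\mu_\Lambda^{\delta_0,\beta}}\bigl[\cosh(2 t_x)\bigr] \;<\; \infty.
\]

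This uniform exponential-moment bound on the field is precisely the content of the delocalization theorem of \cite{dsz}, valid in $d \ge 3$ for large $\beta$; this is what defines $\beta_c(d)$. Once the bound is in hand the expected energy is finite, hence the energy is almost surely finite, the effective resistance from $0$ to infinity in the limiting environment is almost surely finite, and a.s.\ transience of the reversible chain (and thus of the VRJP) follows. The main obstacle is to extract from \cite{dsz} an estimate in exactly the form required here: their arguments are formulated with $\epsilon_i > 0$ at every site, whereas our reduction uses $\epsilon = \delta_0$, a single pinning at the origin. A comparison between the two boundary conditions, in the spirit of the derivation of Theorem~\ref{exp} from \cite{ds}, together with a tightness argument needed to construct the infinite-volume environment $(U_x)_{x \in \Z^d}$ under which VRJP on $\Z^d$ realizes as a mixture of reversible chains, are the two technical steps required to complete the proof.
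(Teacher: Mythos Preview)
Your plan follows the same broad strategy as the paper --- represent VRJP as a mixture of reversible chains via Theorem~\ref{meas}, then combine the delocalization estimate of \cite{dsz} with a Thomson/flow-energy bound --- but the paper's execution differs in two ways that dissolve both of the ``technical steps'' you flag.

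First, the paper does not attempt to bound $\mu_{\Lambda_n}^{\delta_0,\beta}\bigl[e^{-(t_x+t_y)}\bigr]$ or $\mu_{\Lambda_n}^{\delta_0,\beta}\bigl[\cosh(2t_x)\bigr]$. The DSZ estimate (Theorem~1 of \cite{dsz}) is a bound on $\mu\bigl[\cosh^m(t_x-t_y)\bigr]$ for lattice points $x,y\in\Lambda_n$, i.e.\ on \emph{differences} of the field, not on the field itself; with pinning only at the origin your claimed bound on $\cosh(2t_x)$ is not ``precisely the content'' of their theorem and would need a separate control of $t_0$. The paper instead bounds the scale-invariant quantity $c_0\,R(0,\partial\Lambda_n,c)$. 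Writing $c_0/c_{i,j}=\sum_{l\sim 0}e^{(t_0-t_i)+(t_l-t_j)}$, Cauchy--Schwarz together with the DSZ bound for $m=2$ gives $\mu[c_0/c_{i,j}]\le 16d$, hence $\mu\bigl[c_0\,R(0,\partial\Lambda_n,c)\bigr]\le 16d\,R(0,\partial\Lambda_n)$ via the same optimal unit flow $\theta$. You even note that transience is invariant under rescaling; multiplying through by $c_0$ is exactly the rescaling that converts absolute field values into differences.

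Second, the paper never constructs an infinite-volume environment. Since $c_0\,R(0,\partial\Lambda_n,c)=1/P_0^c(H_{\partial\Lambda_n}<\tilde H_0)$, Jensen's inequality gives
\[
\frac{1}{\P_0^\beta(H_{\partial\Lambda_n}<\tilde H_0)} \;\le\; \mu_{\Lambda_n}^{\delta_0,\beta}\bigl[c_0\,R(0,\partial\Lambda_n,c)\bigr] \;\le\; 16d\,R(0,\partial\Lambda_n),
\]
which bounds the \emph{annealed} escape probability of the VRJP on each finite box directly. Since $d\ge 3$ the right-hand side stays bounded as $n\to\infty$, and $\P_0^\beta(\tilde H_0=\infty)>0$ follows without any tightness or infinite-volume construction.

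So both obstacles you identify are genuine for the route you sketch, but they disappear once one works with $c_0\,R$ rather than $R$.
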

\begin{proof} (Proposition \ref{expvrjp})
The strategy is to follow the proof of \cite{ds}, Theorem 2, and to truncate the random variables $\beta_e$ at adequate positions. For convenience  we provide a self-contained proof but the only new input compared to \cite{ds}, Theorem 2, lies in the threshold argument (\ref{thres1}--\ref{thres3}).  
Let us define, for any $\La\subset \Z^d$,  $(\epsilon_i)_{i\in \La}\in\R_+^\La$
$$d\nu_\La^{\e ,\be}(t):=\left(\prod_{i\in \La}\frac{dt_j}
{\sqrt{2\pi}}\right)e^{-F^\beta_\La(\nabla t)}e^{-M_\La^{\e}(t)},$$
which is not a probability measure in general.

We fix now a finite connected subset $\Lambda\subset \Z^d$ containing 0, and $x$. 
Let $\G_x$ be the set of non-intersecting paths in $\Lambda$ from $0$ to $x$. For notational purposes, any element $\g$ in $\G_x$ is defined here as the set of non-oriented edges in the path. We let $\La_\g$ and $\La_\g^c$ be respectively the set of vertices in the path and its complement.  We say that an edge $e$ is adjacent to the path $\gamma$ if $e$ is not in $\gamma$
and has one adjacent vertex in $\gamma$, i.e. if $e=\{i,j\}$ with $i\in\La_\g, j\not\in\La_\g$; we write $e\sim \gamma$.

We first proceed similarly to (3.1)-(3.4) in \cite{ds}, Lemma 2. Let $\ttt_\Lambda$ be the set of spanning trees of $\Lambda$.
By the matrix-tree theorem
$$
\det(A_\Lambda^{\eta\delta_0,\beta})= \eta e^{t_0} \sum_{T\in \ttt_\Lambda} \prod_{\{i,j\}\in T} \beta_{\{i,j\}} e^{t_{i}+t_{j}}.
$$
In a spanning tree $T$ there is a unique path between 0 and $x\in \Lambda$.
Decomposing this sum depending on this path we deduce
$$
\det(A_\Lambda^{\eta\delta_0,\beta})=
 \eta e^{t_0} \sum_{\g\in\G_x} \left( \prod_{\{i,j\}\in \gamma} \beta_{\{i,j\}} e^{t_i+t_j}\right) \sum_{T'\in \ttt_\Lambda^\g} 
 \prod_{{\{i,j\}}\in T'} \beta_{\{i,j\}} e^{t_{i}+t_{j}}
$$
where $\ttt_\Lambda^\g$ is the set of subsets $T'\subset E_\Lambda\setminus \gamma$ such that $\gamma\cup T'$ is
a spanning tree.
By the matrix-tree theorem, we have
\begin{eqnarray}
\label{MTA}
\sum_{T'\in \ttt_\Lambda^\g} 
 \prod_{{\{i,j\}}\in T'} \beta_{\{i,j\}} e^{t_{i}+t_{j}}=
\det ( A_{\Lambda_\g^c}^{\epsilon, \beta})
\end{eqnarray}
where $(\epsilon_i)_{i\in \La_\g^c}$ is the vector defined by
$$\e_i:=\sum_{k\in\La_\g, k\sim i}\be_{\{i,k\}}e^{t_k}, \;\;\; \forall i\in \La_\g^c$$
It follows that
\begin{eqnarray}
\label{DDD}
\mathsf{det}\, D_\La^{\eta\delta_0,\beta}=\eta e^{-t_x}\sum_{\g\in\G_x}\left(\prod_{e\in\g}\be_{e}\right) \,\,\mathsf{det}\, D_{\La_\g^c}^{\e,\beta}.
\end{eqnarray}
 
  Let us define, similarly as in (2.12) and (2.14) in \cite{ds}, for $t_\g=t_{|\Lambda_\g}$ the restriction of
 $t$ to the vertices on the path $\gamma$,
\begin{align}
\label{zzz} Z_{\La_\g^c}^{\g,\beta}(t_{\g})&:=\nu_{\La_\g^c}^{\eta\delta_0,\beta}
\left(\sqrt{\mathsf{det}\, D_{\La_\g^c}^{\e,\beta}}e^{-F_{\partial\g}^\beta(\nabla t)}\right)\\
\nonumber F_{\partial\g}^\beta(\nabla t)&:=\sum_{k\in\La_\g, j\in\La_\g^c, k\sim j}\be_{kj}(\cosh(t_j-t_k)-1).
\end{align}

Now
\begin{align}
\nonumber
\mu_\La^{\eta\delta_0,\be}\left(e^{t_x/2}\right)&=\nu_\La^{\eta\delta_0,\be}\left(\sqrt{\mathsf{det}\, D_\La^{\eta\delta_0,\beta} e^{t_x}}\right)
=\sqrt{\eta}\nu_\La^{\eta\delta_0,\be}\left(\sqrt{\sum_{\g\in\G_x}\prod_{e\in\g}\be_e \,\,\mathsf{det}\, D_{\La_\g^c}^{\e,\beta}}\right)\\
\label{estmu}
&\le\sqrt{\eta} \sum_{\g\in\G_x}\left(\prod_{e\in\g}\sqrt{\be_e}\right)\nu_{\La_\g}^{\eta\delta_0,\beta}\left(Z_{\La_\g^c}^{\g,\beta}(t_\g)\right),
\end{align}
using (\ref{DDD}) in the second equality 
%that $\mathsf{det}_{\La_\g^c}\, D_\La^\e=\mathsf{det}\, D_{\La_\g^c}^{\te}$,
and, in the inequality, that for all $\g\in\G_x$, 
$$d\nu_\La^{\eta\delta_0,\beta}(t)=d\nu_{\La_\g}^{\eta\delta_0,\beta}(t)d\nu_{\La_\g^c}^{\eta\delta_0,\beta}(t)e^{-F_{\partial\g}(\nabla t)}.$$ 
 
The new argument compared to theorem \ref{exp} which allows to handle the case of random parameters $\beta$ is
the following truncation. Given $\g\in\G_x$, let $(\tilde\beta_e)$ be the set of random variables defined by
\begin{eqnarray}
\label{thres1}
\tilde\beta_e=\left\{\begin{array}{ll}
\min(\beta_e,1),&\hbox{ if  $e\sim \gamma$,}
\\
\beta_{e}, &\hbox{otherwise.}
\end{array}
\right.
\end{eqnarray}
First note that, trivially,
\begin{eqnarray}\label{thres2}
e^{-F^\beta_{\partial \g}(\nabla t)}\le e^{-F^{\tilde \beta}_{\partial \gamma}(\nabla t)}.
\end{eqnarray}
On the other hand, identity (\ref{MTA}) implies that
\begin{eqnarray}\label{thres3}
\det(D_{\La_\g^c}^{\e,\beta})\le \det(D_{\La_\g^c}^{\te,\tilde \beta})\left( \prod_{e\sim \gamma} \max(\beta_{e},1)\right),
\end{eqnarray}
where $(\tilde\epsilon_i)_{i\in \La_\g^c}$ is the vector defined by
$$\te_i:=\sum_{k\in\La_\g, i\sim k}\tilde\be_{\{i,k\}}e^{t_k}, \;\;\; \forall i\in \La_\g^c$$
(In the last argument we used that, for any $\{i,j\}$ adjacent to $\gamma$, $\beta_{i,j}=\tilde \beta_{i,j}\max(1,\beta_{i,j})$).  
Therefore
\begin{eqnarray}\label{thres4}
Z_{\La_\g^c}^{\g,\beta}(t_{\g})\le Z_{\La_\g^c}^{\g,\tilde\beta}(t_{\g}) \prod_{e\sim \gamma} \sqrt{\max(\beta_{e},1)}
\end{eqnarray}
with $Z_{\La_\g^c}^{\g,\tilde\beta}(t_\g)$ defined as in (\ref{zzz}) with $\epsilon, \beta$ replaced by $\te,\tilde\beta$.
Hence we can replace 
$\beta$ by $\tilde \beta$ at the cost of the term $\prod_{e\sim \gamma} \sqrt{\max(\beta_{e},1)}$.

%Now, $Z_{\La_\g^c}^{\g,\tilde\beta}(t_\g)$ approximates the normalization constant
%$$
%Z_{\La_\g^c}^{\te}=
%1=\nu_{\La_\g^c}^{\te,\tilde\beta}\left(\sqrt{\mathsf{det}\, D_{\La_\g^c}^{\te,\tilde\beta}}\right),
%$$
%with the difference that, in the former,  the measure considered  is $\nu^{\delta_0,\tilde\beta}$ instead of $\nu^{\te,\tilde\beta}$, and there is a multiplicative factor $e^{-F_{\partial\g}^{\tilde\beta}(\nabla t)}$ (which is helpful, since we aim to upper bound $Z_{\La_\g^c}^{\g,\tilde\beta}(t_\g)$). 
\noindent The following lemma, which adapts Lemma 3 \cite{ds}, provides an upper bound of $Z_{\La_\g^c}^{\g,\tilde\beta}(t_\g)$. 
%It uses in a crucial way the fact that the measure $\mu_{\Lambda_\gamma^c}^{\tilde\epsilon,\tilde\beta}$ is a probability
%measure.
\begin{lemma}
\label{estz}
For any configuration of $t_\gamma=t_{| \Lambda_\gamma}$, 
$Z_{\La_\g^c}^{\g,\tilde\beta}(t_\g)\le e^{\sum_{e\sim \gamma}\tilde\be_{e}}.$
\end{lemma}
\begin{proof}
We have
$$
Z_{\La_\g^c}^{\g,\tilde\beta}(t_\g)
=\int \left({\prod_{j\in \Lambda_\g^c} dt_j\over \sqrt{2\pi}}\right) e^{-F_{\Lambda_\g^c}^{\tilde\be}(\nabla t)-F^{\tilde\be}_{\partial \gamma}(\nabla t)}
\sqrt{ \det( D_{\Lambda_\g^c}^{\tilde\e, \tilde\beta})}
$$
Let $t^*=\max\{t_k, \; k\in \Lambda_\g\}$. We perform the following translation in the variables $t_j\rightarrow t_j+t^*$ for $j\in \Lambda_\g^c$:
in the previous integral the term $F_{\Lambda_\g^c}^{\tilde\beta}(\nabla t)$ does not change, the term $F^{\tilde\beta}_{\partial \gamma}(\nabla t)$ becomes
$$
\sum_{k\in\La_\g, j \in\La_\g^c, k\sim j}\tilde\be_{kj}(\cosh(t_j+t^*-t_k)-1),
$$
and the term $\det(D_{\Lambda_\g^c}^{\tilde\e, \tilde\beta})$ is replaced by $\det(D_{\Lambda_\g^c}^{e^{-t^*}\tilde\e, \tilde\beta})$.
Since $t^*-t_k\ge0$ we have
$$
\cosh (t_j+t^*-t_k)-1\ge e^{t_k-t^*}(\cosh(t_j)-1)+(e^{t_k-t^*}-1).
$$
This implies that
$$
\sum_{k\in\La_\g, j \in\La_\g^c, k\sim j}\tilde\be_{kj}(\cosh(t_j+t^*-t_k)-1)\ge M_{\Lambda_\g^c}^{e^{-t^*} \tilde \e}(t) + 
\sum_{k\in\La_\g, j \in\La_\g^c, k\sim j} \tilde\be_{k,j} (e^{t_k-t^*}-1),
$$
and
$$
Z_{\La_\g^c}^{\g,\tilde\beta}(t_\g)\le
e^{\sum_{k\in\La_\g, j \in\La_\g^c, k\sim j} \tilde\be_{k,j} (1-e^{t_k-t^*})}  \mu_{\Lambda_\g^c}^{e^{-t^*}\tilde\e,\tilde\be}(1) 
\le e^{\sum_{e\sim \gamma}\tilde\be_{e}},
$$
using that $\mu_{\Lambda_{\g}^c}^{e^{-t^*}\tilde\e,\tilde\be}$ is a probability measure.
\end{proof}
Combining \eqref{estmu}, (\ref{thres4}), Lemma \ref{estz}, and integration on the variables $(\nabla t_e)_{e\in \gamma}$, we obtain
that
$$
\mu_{\La}^{\eta\delta_0, \beta}\left( e^{t_x/2}\right) \le 
I_{\eta}\sum_{\g\in\G_x}\left( \prod_{e\sim \gamma} \sqrt{\max(\beta_{e},1)}e^{\min(\beta_e,1)}\right)
\left(\prod_{e\in \gamma} I_{\beta_e}\right)
$$
We set 
$$
\hat I_a =\E(I_\beta)={\Gamma(a+\demi)\over \Gamma(a)} \int_{-\infty}^\infty \cosh(t)^{-a-\demi} dt,
$$
and
$$
\hat J_a = \E(\max (\beta,1)e^{\min(\beta,1)}).
$$
where $\beta$ is a gamma random variable with parameter $a$. Clearly, $\hat I_a$ and $\hat J_a$ tend respectively
to 0 and 1 when $a$ tends to 0. 
Integrating on the random variables $\beta_e$ we deduce that there exists a constant $C_0$ such that 
$$
\E\left(\mu_\La^{\eta\delta_0,\be}(e^{t_x/2})\right)\le C_0 \left( \hat I_a (\hat J_a)^{2d-2}(2d-1)\right)^{\vert x\vert}, 
$$
$$
\E\left((\beta_{x,y})^{{1\over 4}}\mu_\La^{\eta\delta_0,\be}(e^{t_x/2})\right)\le C_0 \left( \hat I_a (\hat J_a)^{2d-2}(2d-1)\right)^{\vert x\vert}, 
$$
for any $y\sim x$.
This provides the exponential decrease for all $a>0$ such that 
$$
\hat I_a (\hat J_a)^{2d-2}(2d-1)<1.
$$
\end{proof}
\begin{proof}(Corollary \ref{rec2})
For any connected finite set $\Lambda$ containing 0, by Theorems \ref{annealed} and \ref{meas}, 
the ERRW on $\Lambda$ starting 0 and with constant initial parameter $a$
is a mixture of reversible Markov chains in conductance
$c_{x,y}=\beta_{x,y} e^{t_x+t_y}$, where $\beta_{x,y}$ are Gamma$(a,1)$ independent random variables. 
As in Corollary \ref{rec}, there exists a constant $C>0$ such that
\begin{eqnarray*}
\E\left(\mu_\La^{\delta_0,\beta}( (c_{x,y}/c_{\delta,0})^{1/4})\right)
&\le& 
C\E\left( (\beta_{x,y})^{{1\over 4}}\left[\mu_\La^{\delta_0,\beta}(e^{t_x/2})\mu_\La^{\delta_0/2,\beta}(e^{t_y/2})\right]^{1/2} \right)
\\
&\le&
C\E\left( (\beta_{x,y})^{{1\over 4}}\mu_\La^{\delta_0,\beta}(e^{t_x/2})\right)^{1/2} 
\E\left((\beta_{x,y})^{{1\over 4}}\mu_\La^{\delta_0/2,\beta}(e^{t_y/2})\right)^{1/2}
\end{eqnarray*}
The rest of the proof is similar to the proof of Corollary \ref{rec}.
\end{proof}

\begin{proof} (Corollary \ref{trans})
Fix $d\ge3$. Let $\La_n=\{i\in \Z^d, \; \| i\|_\infty \le n\}$ be the ball centred at $0$ with radius $n$ and 
$\partial \Lambda_n=\{i\in \Z^d, \; \| i\|_\infty = n\}$ its boundary. Let $E_n$ be the set of edges contained in $\Lambda_n$.
Disertori, Spencer and Zirnbauer prove in Theorem 1 \cite{dsz} (see also their remark above) that, for any $m>0$, there exists $\tilde{\be}_c(m)$ such that, for any $\beta>\tilde \beta_c(m)$, for any $n\in\N$, $x$, $y$ $\in\La_n$, 
\begin{equation}
\label{dsz}
\mu_{\La_n}^{\de_0,\be}\left(\cosh^m(t_x-t_y)\right)\le2;
\end{equation}
the result is stated for constant pinning, but its proof does not require that assumption, as we checked through careful reading. 

We consider the VRJP on $\Z^d$ with constant conductances $W_{i,j}=\beta$ and denote by
  $\P_0^\beta(.)$ its law starting from $0$. We denote by $P_0^c$ the law of the Markov chain in conductances
$c_{i,j}= \beta e^{t_i+t_j}$ starting from 0, where $(t_i)$ is distributed according to $\mu_{\Lambda_n}^{\delta_0, \beta}$.
Let $H_{\partial\Lambda_n}$ be the first hitting time of the boundary $\partial \Lambda_n$ and $\tilde H_0$ be the first
return time to the point $\delta$.
Let $R(0,\partial\La_n)$ (resp.  $R(0,\partial\La_n,c)$) be the effective resistance between $0$ and $\partial\La_n$ for conductances $1$ (resp. $c_{i,j}$). Classically
$$
 c_{0} R(0, \partial \Lambda_n,c) = {1\over P^c_0 (H_{\partial \Lambda_n}<\tilde H_0)}
$$
with $c_0=\sum_{j\sim 0} c_{0,j}$.
By Theorem \ref{meas} and Jensen's inequality,
\begin{eqnarray}
 {1\over \E^\beta_0( H_{\partial\Lambda_n} <\tilde H_0 )}\le
\mu_{\Lambda_n}^{\delta_0, \beta} ({1\over P^c_0 (H_{\partial \Lambda_n}<\tilde H_0)}) 
&\le& 
 \mu_{\Lambda_n}^{\delta_0, \beta} ( c_0 R(0, \partial \Lambda_n,c))
 \label{inbd}
% // &=& 1+ \mu_{\Lambda_n}^{\delta_0, \beta} ( e^{t_0} R(0, \partial \Lambda_n,c))
\end{eqnarray}
Let us now show that for all $\be>\tilde{\be}_c(2)$,
\begin{eqnarray}\label{Resistance}
\mu_{\La_n}^{\de_0,\be}\left[c_0 R(0,\partial\La_n,c)\right]\le 16d R(0,\partial\La_n)
\end{eqnarray}
This will enable us to conclude: since $\limsup R(0,\partial \Lambda_n) <\infty$, \eqref{inbd} and \eqref{Resistance} imply that $ \Pb^\beta_0( \tilde H_0 =\infty)>0$.

Indeed, let $\theta$ be the unit flow from 0 to $\partial \Lambda_n$ which minimizes the $L^2$ norm. Then
$$
R(0,\partial \Lambda_n, c) \le \sum_{\{i,j\}\in E_n} {1\over c_{i,j}}  \theta^2(i,j),
$$
and 
$$
R(0,\partial \Lambda_n) = \sum_{\{i,j\}\in E_n} \theta^2(i,j).
$$
Now, for all $\beta>\tilde \beta_c(2)$, using identity \eqref{dsz},
$$
 \mu_{\Lambda_n}^{\delta_0, \beta} ( {c_0\over c_{i,j}})\le \sum_{l\sim0} \mu_{\Lambda_n}^{\delta_0, \beta}(e^{2(t_0-t_i)})^{1/2}
  \mu_{\Lambda_n}^{\delta_0, \beta}(e^{2(t_l-t_j)})^{1/2} 
 \le
 16d
 $$
 %where we choose $C$ such that $ 3u\le \demi (\cosh(u)-1) + 3\log C$, which implies 
 %$$ \mu_{\Lambda_n}^{\delta_0, \beta}(e^{-3t_0})\le
 %C^3\mu_{\Lambda_n}^{\delta_0, \beta}(e^{\demi(\cosh(t_0)-1)})\le 2C^3.
 %$$
 
\end{proof}
\medskip
\noindent
{\bf Acknowledgment.} We are particularly grateful to Krzysztof Gawedzki for a helpful discussion on the hyperbolic sigma model, and for pointing out reference \cite{dsz}. We would also like to thank Denis Perrot and Thomas Strobl for suggesting a possible link between the limit measure of VRJP and sigma models. We are also grateful to Margherita Disertori for a useful discussion on localization results on the hyperbolic sigma model.

\footnotesize
\bibliographystyle{plain}
\bibliography{errw-cvrrw}

\end{document}